\newtheorem{lemma}{Lemma}
\newtheorem{cor}{Corollary}
\newtheorem{prop}{Proposition}
\newtheorem{theorem}{Theorem}
\theoremstyle{definition}
\theoremstyle{remark}
\newcommand{\mr}[1]{\mathrm{#1}}
    \let\c\gamma  
  \let\l\lambda
\def\C{\mathbb C}
\def\E{\mathcal E}
\def\H{\mathfrak H}
\def\Gal{{\rm Gal}}
\def\Frob{{\rm Frob}}
\def\F{{\mathbb F}}
\def\Z{{\mathbb Z}}
\def\Q{{\mathbb Q}}
\def\G{\Gamma}
\def\Ind{\mathrm{Ind}}
\def\SL{\mathrm{SL}_2(\mathbb Z)}
\begin{document}

\title[On  $\ell$-adic representations for noncongruence cuspforms]
{On  $\ell$-adic representations for a space of noncongruence cuspforms}

\author{Jerome William Hoffman}
\address{Department of Mathematics \\Louisiana State University\\
Baton Rouge, LA 70803\\USA} \email{hoffman@math.lsu.edu}

\author{Ling Long}
\address{Department of Mathematics\\Iowa State University\\ Ames, IA 50011 \\USA}
\email{linglong@iastate.edu}

\author{Helena Verrill}
\address{Warwick Mathematics Institute, UK} \email{H.A.Verrill@warwick.ac.uk}

\thanks{The second author was supported in part
by the NSA grant \#H98230-08-1-0076. Part of the work was done during Long's visit to University of California at Santa Cruz.
This research was initiated during
an REU summer program at LSU, supported by the
National Science Foundation grant DMS-0353722 and a
Louisiana Board of Regents Enhancement grant,
LEQSF (2002-2004)-ENH-TR-17.
The last author was partially supported by grants
LEQSF (2004-2007)-RD-A-16 and NSF award DMS-0501318.}

\date{today}

\begin{abstract}
This paper is concerned with  a compatible family of 4-dimensional $\ell$-adic representations
$\rho_{\ell}$ of $G_\Q:=\Gal(\overline \Q/\Q)$ attached to the space of weight 3 cuspforms
$S_3 (\Gamma)$ on a noncongruence subgroup $\Gamma \subset \SL$. For this representation
we prove that:
\begin{itemize}
\item[1.] {It is automorphic: the $L$-function $L(s, \rho_{\ell}^{\vee})$ agrees with the L-function for an automorphic form for $\text{GL}_4(\mathbb A_{\Q})$,} where $\rho_{\ell}^{\vee}$ is the dual of $\rho_{\ell}$.
\item[2.] For each prime $p \ge 5$ there is a basis $h_p = \{ h_p ^+, h_p ^- \}$ of $S_3 (\Gamma )$ whose expansion coefficients
satisfy 3-term Atkin and Swinnerton-Dyer (ASD) relations, relative to the $q$-expansion
coefficients of a newform $f$ of level 432. The structure of this basis depends on the class of $p$ modulo 12.
\end{itemize}
The key point is that the representation $\rho_{\ell}$ admits a quaternion multiplication
structure in the sense of \cite{ALLL09}.
\end{abstract}

 \maketitle

\section{Introduction}
\label{S;intro}
\subsection{}
\label{SS:intro1} Recall that a subgroup
of finite index  $\Gamma \subset \SL$ is a congruence subgroup if $\Gamma \supset \Gamma (N)$
for some integer $N\ge 1$, where  $\Gamma (N) \subset \SL$ is the normal subgroup consisting of matrices
congruent to the identity modulo $N$; $\Gamma$ is a noncongruence subgroup if it is not a congruence
subgroup. There is a vast
theory of modular forms on congruence subgroups (general reference for facts
and notation: \cite{Shi71}, \cite{d-s-mfbook}).
By contrast, modular forms on noncongruence subgroups
are less well-understood, and they exhibit qualitatively different behavior.
It is well known
that $S_{k}(\Gamma _0 (N),\chi )$
has a basis
of Hecke eigenforms, which have $q$-expansions
\[
f(z)=\sum_{n\ge 1} a_n (f)q^n, \quad \mr{where\ \ }q = \exp(2\pi iz),
\]
with
$a_n$ satisfying the relations
\begin{equation}
\label{eqn:Heckerelation} a_{np} - a_pa_n + \chi(p)p^{k-1}a_{n/p} =
0,\quad a_n = a_n(f)
\end{equation}
for all positive integers $n$ and primes $p\nmid N$, taking $a_{n/p}=0$ if
$p\nmid n$. Moreover, $a_p$ is the trace of Frobenius
for a two-dimensional $\lambda$-adic representation
$\rho _f$ of  $G_\Q:=\Gal(\overline \Q/\Q)$ (\cite{D,DS,L}).

\subsection{}
\label{SS:intro2}
If $\Gamma$ is a noncongruence subgroup, there is in general no Hecke eigenbasis
for $S_k (\Gamma)$, the space of weight $k$ cuspforms for $\G$,
but rather it is conjectured that, at least in certain circumstances, for almost all primes $p$ there is a
basis $\{ h_j = h_{p, j} \}$ such that the $q$-expansion coefficients satisfy
3-term Atkin-Swinnerton-Dyer (ASD)
congruences
in the general shape:
\begin{equation}
\label{eqn:ASDrelation} a_{np}(h_j) - \alpha _p (j) a_n (h_j) + \chi _j (p)  p^{k-1}a_{n/p}(h_j) \equiv
0 \mr{\ mod\ }(np)^{k-1}
\end{equation}
where $|\alpha _p (j) | \le  2 p^{(k-1)/2}$  and $\chi _j (p)$ is a root of unity. In
\cite{Sch85i,Sch85ii}, A. J. Scholl proved the existence of $(2d+1)$-term ASD
congruences ($d = \dim S_k (\Gamma )$), under some standard assumptions such as the modular curve being defined over $\Q$ with infinity as a $\Q$-rational point. In fact, for every prime $\ell$, Scholl proved the
existence of a $G_{\Q}$-representation $\rho _{\ell} = \rho _{\Gamma , k, \ell}$
acting  on an $\ell$-adic space $W_{\ell }(\Gamma)$ of dimension $2d$
analogous to Deligne's construction for congruence subgroups. He also constructed $2d$-dimensional
$\Q _p$-vector spaces,  $V_p (\Gamma )$, with an action of a Frobenius operator and containing the subspace $S_k (\Gamma )\otimes \Q _p$, which are
the analogs in crystalline cohomology of the $\ell$-adic space $W_{\ell}(\Gamma )$. Scholl achieved the $(2d+1)$-term ASD congruences via a comparison theorem.  He managed to refine this to obtain 3-term congruences when the characteristic polynomials of those $(2d+1)$-term recursions have $d$ distinct $p$-adic roots.
In special cases involving  extra symmetries, such as 4-dimensional Scholl representations satisfying Quaternion Multiplications (see \cite{ALLL09}), we can find in a systematic way a basis of the noncongruence modular forms
whose members satisfy 3-term congruences, see Section \ref{S:aswd}. 

In recent studies (\cite{lly05}, \cite{long061}, \cite{all05},
\cite{l5}, \cite{ALLL09},
to which we send the reader for more background and precise conjectures) the $\alpha _p (j)$, up to multiplying by roots of unity in clear patterns,
are the $p$th $q$-expansion coefficients of newforms $f_j$ on {\it congruence} subgroups. To be more precise, there is a quadratic field $K$ such that $T^2-a_p(j)T+\chi_j(p)p^{k-1}$ is the Hecke polynomial at some place of { $K$}
over $p$ for some  automorphic form for $\text{GL}_2$ over { $K$} (see \cite[Thm. 4.3.2]{ALLL09} for details).

\subsection{}
\label{SS:intro3}
In this paper, we have a particular noncongruence group $\Gamma$ and $k = 3$, $d = 2$.
Experimentally it was discovered that the degree 4 polynomials  of the geometric Frobenii under the corresponding Scholl representation (denoted by $\rho_{\ell,2}$ below) factor into quadratic
pieces with coefficients in $\Q (\sqrt{-3})$, $\Q (\sqrt{-2})$ or $\Q (\sqrt{-6})$,
respectively as $p$ is congruent to $1 \mr{\ mod\ } 3$,  $5 \mr{\ mod\ } 12$, or $11 \mr{\ mod\ } 12$,
and moreover, the linear terms in these polynomials matched the $p$th Fourier coefficients
of a newform $f$ of level 432, up to multiplication by a twelfth root of unity. This trichotomy is explained by the existence of an order-8 quaternion group acting on  $\rho_{\ell,2}$. This falls into the general framework of 4-dimensional Galois representations admitting quaternion multiplications that is studied in detail in \cite{ALLL09}, except that the action of the quaternion group is not defined over a quadratic or biquadratic field. To overcome the extra complication, we use an auxiliary 4-dimensional $G_{\Q}$ representation $\rho_{\ell,4}$, which is a 4-dimensional Galois representation admitting quaternion multiplication defined over a biquadratic field. The automorphy of $\rho_{\ell,4}$ is known due to \cite{ALLL09}. It requires additional work to link $\rho_{\ell,4}$ to the level 432 newform $f$. We use $\rho_f$ to denote the 2-dimensional Deligne representation of $G_\Q$ attached to the Hecke newform $f$. The relations between $\rho_{\ell,2}$, $\rho_{\ell,4}$, and $\rho_{f}$ are depicted by the following diagram. For the precise statements, see
Corollary \ref{C:modular2},
Theorems \ref{thm:mod-main-1},  \ref{thm:mod-main-2}, and \ref{thm:mod-main}.

$${\small\xymatrix{\rho_{\ell,2} \ar@{=}[d]\ar@{--}[rr]\ar@{--}[dr] & &\rho_{f}\ar@{--}[d] \ar@{--}[dl]\\
\Ind_{G_{\Q(\sqrt{-3})}}^{G_\Q} \sigma_{\l,2,-3}  \ar@{--}[d]& \rho_{\ell,4}=\Ind_{G_{\Q(\sqrt{-3})}}^{G_\Q} \sigma_{\l,4,-3}  \ar@{=}[d] \ar@{--}[dl]&\Ind _{G_{\Q(i)}}^{G_\Q}[ (\rho_{f}\mid_{G_{\Q(i)}})\otimes \chi]\ar@{=}[d]\\
\sigma_{\l,4,-3} =\sigma_{\l,2,-3} \otimes \psi&\Ind_{G_{\Q(i)}}^{G_\Q} \sigma_{\l,4,-1}  \ar@{--}[r] & (\sigma_{\l,4,-1} )^{ss}
}}$$
 {For a number field $K$, let $G_K$ denote $\Gal(\overline \Q/K)$. In the diagram above,  both $\sigma_{\l,2,-3} $ and $\sigma_{\l,4,-3} $ are 2-dimensional representations of $G_{\Q(\sqrt{-3})}$, $\psi$ is a cubic character of $G_{\Q(\sqrt{-3})}$ and $\chi$ is a quartic character of $G_{\Q(i)}$ where $i=\sqrt{-1}$.

\subsection{}
\label{SS:intro6}

For each prime $p\ge 5$ we prove that there exists a basis
$h_p = \{ h_p ^+, h_p ^- \}$ of $S_3 (\Gamma )$ whose expansion coefficients
satisfy 3-term Atkin and Swinnerton-Dyer (ASD) relations. To be more precise, there exists a finite extension $E$ of $\Q_p$ such that the coefficients of $h_p^{\pm}=\sum_{n \ge 1} a_\pm(n)q^n\in \mathcal O_E[[q]]$
 satisfy $$ a_{\pm}(np^r)-A_{p,\pm}a_{\pm}(np^{r-1})+B_{p,\pm}a_{\pm}(np^{r-2})\equiv 0 \mod p^{r(k-1)}, \forall n,r\ge 1,$$
 where $\mathcal O_E$ is the ring of integers of $E$, $A_{p,\pm},B_{p,\pm}\in \mathcal O_E$
and the weight $k = 3$.
 We say that we have ASD congruences relative to the polynomial
$X^2 -A_{p, \pm}X +B_{p,\pm}$.
 The structure of $h_p$ depends only
on the class of $p$ mod 12. As an application of the modularity result mentioned above, $A_{p,\pm},B_{p,\pm}$ are determined by the $p$-coefficient of $f$ and the characters $\psi$ and $\chi$.
See Propositions \ref{prop:p=1 mod 3},  \ref{prop:p=5}, and \ref{prop:p=11}.


\subsection{}
\label{SS:intro7}The paper is organized as follows.
In \S \ref{S:group}, we describe the group $\Gamma$ and the
family of elliptic curves $E(\Gamma)\to X(\Gamma)$ associated to it.
In \S \ref{S:corr} we define some correspondences $B_j$ of the
elliptic surface $E(\Gamma)$. The main point is that these define
a quaternion multiplication structure on associated cohomology spaces.
In \S \ref{S:modular} we show that the 4-dimensional $\ell$-adic
representations $\rho_{\ell, 2}$, $\rho_{\ell, 4}$ are induced in several
ways from subgroups of index two in $G_{\Q}$. \S \ref{S:mod}
proves the main modularity theorem: the $L$-function of the representations
$\rho_{\ell, 2}^{\vee}$ (resp. $\rho_{\ell, 4}^{\vee}$) can be expressed in terms of the Hecke
 $L$-function of a newform $f$ of level 432 and some explicit characters. This is done by the method
 of Faltings-Serre. The ASD congruences are proved in
\S \ref{S:aswd}. Tables of experimental data which form the basis of this paper
 appear in \S \ref{S:tab}. These computations began in an REU project
 in summer 2005. The software systems \texttt{Magma}, \texttt{Mathematica}, and \texttt{pari/gp} were
 used.

\section{The group and the space of noncongruence cuspforms}
\label{S:group}
\subsection{}
\label{SS:group1}
If $\Gamma _0\subset \SL $ is a torsion-free subgroup of finite index we let
$Y(\Gamma _0) =\Gamma _0 \backslash \H $ be the quotient of the upper half plane of complex numbers,
and $j : Y(\Gamma _0) \to X(\Gamma _0)$ be the compactification by adding cusps. It is known that
these are the $\C$-points of algebraic curves defined over number fields; in this paper, they
will have models over $\Q$, which we will denote by the same symbols.
Define the analytic space $E(\Gamma _0)$ as the
quotient of $\C \times \H$ by the equivalence relation
\[
(z, \tau ) \sim \left (\frac{z+m \tau + n}{c\tau +d}, \frac{a\tau + b}{c\tau +d}\right ),
\quad m, n \in \Z, \ \ \gamma = \begin{pmatrix}
                                 a & b \\c& d
                                \end{pmatrix} \in \Gamma _0.
\]
Then $f: E(\Gamma _0) \to Y(\Gamma _0)$ is a fibration of elliptic
curves. When $\Gamma _0$ is a congruence subgroup, these are the
$\C$-points of schemes defined over number fields and represent (at
least coarsely) moduli problems for elliptic curves. In this paper,
they will be defined over $\Q$ and designated by the same symbols.

\subsection{}
\label{SS:group2}
(For generalities on the moduli spaces of elliptic curves, see \cite{DR}, \cite{km}).
The stack $[\Gamma _0 (8)]$ classifies pairs $(E, C)$ of elliptic curves
$E$ together with subgroup schemes $C\subset E$ locally isomorphic to $\Z /8$. Since
$\pm 1 \in \Gamma _0 (8)$ the map $[\Gamma _0 (8)] \to M(\Gamma _0 (8))$ is two to one, { where
for a congruence subgroup $\Gamma _0$, $M(\Gamma _0)$ denotes the corresponding
(coarse) moduli scheme}. One knows that
$M(\Gamma _0 (8))\otimes \Q  \cong \mathbf{P} ^1 _{\Q} = X(\G_0(8))$.

\subsection{}
\label{SS:group3}
The stack $[\Gamma _1 (4)]$ classifies pairs $(E, P)$ of elliptic curves
$E$ together with a point $P$ of exact order 4. This time
$[\Gamma _1 (4)] = M(\Gamma _1 (4))$. One knows that there are two connected components
defined over $\Q(i)$ each of which is isomorphic to $\mathbf{P} ^1 _{\Q (i)}$.

\subsection{}
\label{SS:group4}
The stack $[\Gamma _0 (8)\cap\Gamma _1 (4)]$ classifies triplets
$(E,C, P)$ of elliptic curves $E$ together with $P\in C$ a point of
exact order 4 in a  cyclic subgroup of order 8. We have $[\Gamma _0
(8)\cap \Gamma _1 (4)] =  M(\Gamma _0 (8)\cap \Gamma _1 (4))$. In
fact, projectively  $\pm \Gamma _0 (8)/\pm I_2 =\pm(\Gamma _0 (8)\cap \Gamma _1 (4))/\pm I_2$
so the modular curves are the same:
$M(\Gamma _0 (8)\cap \Gamma _1(4))\otimes \Q
 =  M(\Gamma _0 (8))\otimes \Q\cong \mathbf{P} ^1 _{\Q}$.
It  has a
fine moduli interpretation as the moduli space of triples $(E, C,
P)$ where $E$ is an elliptic curve, $C\subset E$ is a subgroup scheme
of order 8, and $P\in E$ is a point of order 4. A model for its
universal elliptic curve is
\begin{equation}\label{eq:1}
E_8 (t) : \quad y^2 + 4xy + 4t^2y =x^3 + t^2x^2,
\end{equation} where $t=\frac{\eta(z)^8\eta(4z)^4}{\eta(2z)^{12}}
=1-8q+32q^2+\cdots\in \Z[[q]], q=e^{2\pi i z}$ cf. \cite{l5}. The modular function $t$ is  a Hauptmodul of
$\Gamma _0 (8)\cap \Gamma _1 (4)$: a generator
of the function field of the modular curve.  Let
$\G$ be a special index 3 normal subgroup of $\Gamma _0 :=\G_0(8)\cap \G_1(4)$
whose modular curve $X(\G)$ is a cubic cover of  $X(\G_0(8)\cap \G_1(4))$ unramified everywhere except
the cusps $\frac 12$ and
$\frac 14$ (whose $t$-values are $\infty$ and $-1$ respectively),
with ramification degree 3. It is easy to see that the genus of
$X(\G)$ is also 0. To facilitate our calculation, we need to find an
algebraic map between the two modular curves, in  other words, we
need to describe a relation between a Hauptmodul $r_a$ of $\G$ and
$t$. By our construction, $a r_a^3=t+1$ for some nonzero constant
$a$. Here $a$ is a rational number written in lowest form. As we
look for a model so that the $q$-expansion of $r_a$ is in a number field
as small as possible, we take $a = 2$ in which case the coefficients of $r_a$ can be chosen in $\Q$.
The Riemann surface $ Y(\Gamma):=
\Gamma \backslash\H$ has the structure of an algebraic curve over $\Q$. This group
$\G$ is labeled by $\G_{8^3\cdot6\cdot 3\cdot 1^3}$ in \cite{l5}. The cusp widths  of $\Gamma$ are 8-8-8-6-3-1-1-1 from which we know $\G$
is a noncongruence subgroup.
The quotient $\G_0/\G$ is generated by $\zeta \G$, where $ \zeta=\begin{pmatrix}
5&-2\\8&-3
\end{pmatrix}$.  The matrix $A=\begin{pmatrix}
  0&-1\\8&0
\end{pmatrix}$ normalizes $\G$. Both matrices $\zeta$ and $A$ induce actions on $X(\G)$ as well as the spaces of cuspforms for $\Gamma$.

\subsection{}
\label{SS:group5}
Note that in \cite{l5}, different choices of $a$ are picked.
The reason for varying $a$ is that
for some choices, the operators to be defined below had smaller
fields of definition, whereas for other choices, the Galois representation corresponding to $S_3(\G)$ and parameter $a$ (see  \S \ref{S:modular})
was easier to analyze. Only two choices $a = 2, 4$ are relevant to this
paper; the corresponding Hauptmoduls are denoted $r_2$ and $r_4$.
 We
let $f_a: E(\Gamma) \to Y(\Gamma)$ be the pull-back of the universal
elliptic curve in the previous section via the covering of degree
three:
\begin{eqnarray*}
 Y(\Gamma)&\to&  Y(\G_0(8)\cap \G_1(4))\\
 r_a&\mapsto& a r_a^3 -1
\end{eqnarray*}
We let $\E _{\G}$ be the complex elliptic surface obtained by
completing and desingularizing $f: E(\G)\to Y(\Gamma) $ over the
compact curve $X(\Gamma):= (\Gamma \backslash \H)^*$, which is independent of
the choice of $a$. The nonzero Hodge numbers of this surface are
$h^{0, 0} = h^{2, 2} = 1$, $h^{1, 1} = 30$, $h^{2, 0} = h^{0, 2} =
2$. In particular the space of weight three cuspforms
\[
 S_3(\G) = H^0 (\Omega ^2 _{ \E _{\G}/\C})
\]
is two dimensional. This $S_3(\G)$ has a
basis (see  \cite{l5}): \begin{equation}\label{eq:h1&2} h_1=\sqrt[3]{H_1}, \
H_1:={\frac{\eta(z)^4\eta(2z)^{10}\eta(8z)^8}{\eta(4z)^4}}, \quad
h_2=\sqrt[3]{H_2}, \
H_2:={\frac{\eta(z)^8\eta(4z)^{10}\eta(8z)^4}{\eta(2z)^4}}.
\end{equation}

\section{Correspondences}
\label{S:corr}
\subsection{}
\label{SS:corr1}
Given two subgroups of finite index $\Gamma_1, \Gamma_2\subset \mr{SL}_2 (\Z)$ and an element
$\alpha \in  \mathrm{M}_2( \Z)$ with $\det (\alpha ) > 0 $  the double coset
$\Gamma_1 \alpha \Gamma_2$ determines
a correspondence $\Gamma _2 \backslash \mathfrak{H} \longrightarrow \Gamma _1 \backslash \mathfrak{H}$.  Namely, we have a diagram
\[
    \Gamma _2 \backslash \mathfrak{H}  \leftarrow \Gamma \backslash \mathfrak{H}
\rightarrow \Gamma _1 \backslash  \mathfrak{H}
\]
where $\Gamma = \Gamma_2 \cap \alpha ^{-1} \Gamma_1 \alpha $. The first arrow sends
$\tau \in \mathfrak{H}$ to $\tau$ and the
second arrow sends $\tau$ to $\alpha.\tau$. If   $\Gamma_2$ is decomposed into cosets for the
subgroup $\Gamma $
\[
    \Gamma_2 = \coprod _{i=1}^{d} \Gamma . \varepsilon_i,
\]
so that the $\varepsilon_i. \tau$ are the elements of $\Gamma \backslash \mathfrak{H}$ projecting to
$\tau \in \Gamma _2\backslash\mathfrak{H}$, then the double coset decomposes as
\[
    \Gamma_1 \alpha \Gamma_2  = \coprod _{i=1} ^{d} \Gamma_1 . \alpha \varepsilon_i :=
\coprod _{i=1} ^{d} \Gamma_1 . \alpha_i
\]
and the correspondence sends the point $\tau$ mod $\Gamma_2$ to the cycle
\[
   \sum _{i = 1}^{d} \alpha_i \tau \mathrm{\ mod\ } \Gamma_1.
\]
\subsection{}
\label{SS:corr2}
 This diagram lifts to morphisms
\[
    E(\Gamma_2) \stackrel{p_2}{ \leftarrow} E(\Gamma )\stackrel{p_1}{\rightarrow} E(\Gamma_1)
\]
The arrow on the left is induced by $(z, \tau) \to (z, \tau)$. This is a
fiberwise isomorphism. The map on the right is
induced from
\[
  (z, \tau)  \mapsto   (\det(\alpha ).z/j(\alpha, \tau), \alpha.\tau).
\]
This is a fiberwise isogeny.

\subsection{}
\label{SS:corr3}
The double coset $\Gamma _1 \gamma \Gamma _2$ induces maps on cohomology
via
$$p_{2 *}\circ p_1 ^*: H^i (\E _{\Gamma _1}^{k} , \Q) \to H^i(\E _{\Gamma _2}^k, \Q )$$
for any integer $k \ge 1$, where  $\E _{\G}^{k}$ denotes the desingularization of the
$k$-fold fiber product of $\E_{\G}/X(\G)$. Recall that there is a canonical injection
$S_{k+2} (\Gamma) \hookrightarrow H^1 (\E _{\Gamma }^{k} , \C)$ for $k \ge 1$
which identifies it with the $(k+1, 0)$ part of the Hodge structure of pure weight
on the right-hand side. The induced map given by the double coset
$S_{k+2} (\Gamma _1) \to S_{k+2}(\Gamma _2)$ is given by the slash operator:
\[
 f\mid [\Gamma _1 \alpha \Gamma _2]_{k+2} = \det (\alpha )^{k/2 }\sum _{i = 1}^d  f \mid [\alpha _i]_{k+2}
\]
see  \cite[Ch. 3, especially 3.4]{Shi71}.

\subsection{}
\label{SS:corr4}
{ We apply the above to the group $\Gamma_1 = \Gamma _2 = \Gamma _0 (8)$
and the matrix
$$\alpha  = A = \begin{pmatrix}
                       0 & -1 \\ 8 & 0
                          \end{pmatrix}
$$
which normalizes $\Gamma _0 (8)$. Then $k = 1$, $d = 1$ and $\alpha _1 = \alpha$, $p_2 = \text{id}$ so
that the map on cohomology is given by $p_1 ^*$.}
The involution of the modular curve
$Y(\Gamma _0(8))$ induced by this matrix has the moduli interpretation
$(E, C) \mapsto (E/C, E[8]/C)$, where $(E, C) \in Y(\Gamma _0(8)) $ is an
elliptic curve with a cyclic subgroup of order $8$, and $E[8]$ is the
kernel of multiplication by $8$. Recall that $t$ was our chosen Hauptmodul
for the curve $Y(\Gamma _0(8))$.
By a direct calculation, we know $A=\begin{pmatrix} 0&-1\\8&0
\end{pmatrix}$ induces the following map on $X(\G_0(8)\cap \G_1(4))$ (see \cite{l5}):
\begin{equation}\label{eq:AonX}
  t\mapsto\frac{1-t}{1+t}.
\end{equation}
{In this case, the map $p_1^*$ factors
\[
 E_8 (t) \overset{A'}{\longrightarrow}A^* E_8(t) = E_8 \left (\frac{1-t}{1+t}\right )
 \overset{A''}{\longrightarrow} E_8 (t).
\]
Here $A''$ is an isomorphism defined over $\Q$, because it is the base-change
along $E(\Gamma _0 (8)) \to Y(\Gamma _0 (8))$
of the involution $A$ of $Y(\Gamma _0(8))$,  which is an isomorphism defined over $\Q$. Then
$p_1^*$ is an isogeny defined over $\Q (i)$ covering
the automorphism of $\Q (t)$ given by $A$  because of:
}
\begin{prop}
\label{P:corr1}
This action of $A$  on  $X(\Gamma _0(8) \cap
\Gamma_1(4))$  lifts to
an isogeny $A': E_8(t) \to E_8 \left (\frac{1-t}{1+t}\right )$ defined over $\Q (i,
t)$.
\end{prop}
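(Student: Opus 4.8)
The plan is to realize $A'$ as the isogeny attached to the moduli interpretation of $A$ recorded in \S\ref{SS:corr4}. On the level of moduli, $A$ sends a triple $(E,C,P)$ --- an elliptic curve $E$ with a cyclic subgroup $C$ of order $8$ and a point $P\in C$ of exact order $4$ --- to the triple built out of the quotient $E/C$; in particular it is covered by the degree-$8$ isogeny $\phi_t\colon E_8(t)\to E_8(t)/C_t$, where $C_t\subset E_8(t)$ is the universal cyclic subgroup of order $8$. Since $X(\Gamma_0(8)\cap\Gamma_1(4))$ is a fine moduli space for such triples (\S\ref{SS:group4}), the subgroup scheme $C_t$, the quotient $E_8(t)/C_t$, and the isogeny $\phi_t$ are all defined over $\mathbb Q(t)$. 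Because $A$ acts on the Hauptmodul by $t\mapsto\tfrac{1-t}{1+t}$ by \eqref{eq:AonX}, the curve $E_8(t)/C_t$ is geometrically the fibre of the universal family at the parameter $\tfrac{1-t}{1+t}$, so it has the same $j$-invariant as $E_8\!\left(\tfrac{1-t}{1+t}\right)$ and is therefore a quadratic twist of it over $\mathbb Q(t)$. What remains is to identify that twist and to see that it becomes trivial over $\mathbb Q(i,t)$.

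To make everything explicit I would use the $\mathbb Q(t)$-rational point $P=(0,0)$ on \eqref{eq:1}: the tangent line at $(0,0)$ is $y=0$, which meets the cubic again at $(-t^2,0)$, so $2P=(-t^2,0)$ is a $2$-torsion point, $P$ has exact order $4$, $\langle P\rangle=C_t[4]$, and $\langle 2P\rangle=C_t[2]$. Writing $8=2\cdot 2\cdot 2$, I would factor $\phi_t$ as a composite of three $2$-isogenies $E_8(t)\xrightarrow{\phi_1}E^{(1)}\xrightarrow{\phi_2}E^{(2)}\xrightarrow{\phi_3}E_8(t)/C_t$, where $\phi_1$ has kernel $\langle 2P\rangle$, $\phi_2$ has kernel $\langle\phi_1(P)\rangle$, and $\phi_3$ has kernel the image of $C_t$ in $E^{(2)}$. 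Each of these three kernels is a Galois-stable subgroup of order $2$ --- the first two because $P$ is $\mathbb Q(t)$-rational and $\phi_1$ is defined over $\mathbb Q(t)$, the third because $C_t$ is a subgroup scheme over $\mathbb Q(t)$ --- hence each is generated by a $\mathbb Q(t)$-rational $2$-torsion point, and V\'elu's formulas produce $E^{(1)}$, $E^{(2)}$, and $E_8(t)/C_t$ with explicit Weierstrass equations over $\mathbb Q(t)$. Composing the three steps gives an explicit model of $E_8(t)/C_t$ over $\mathbb Q(t)$, from which its invariants $c_4, c_6$ can be read off.

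Comparing this model with $E_8\!\left(\tfrac{1-t}{1+t}\right)$ --- the two curves having equal $j$-invariant by the first paragraph --- determines the twisting class in $\mathbb Q(t)^\times/(\mathbb Q(t)^\times)^2$, and the content of the proposition is that this class is represented by $-1$, so that the normalizing isomorphism $E_8(t)/C_t\xrightarrow{\sim}E_8\!\left(\tfrac{1-t}{1+t}\right)$ is defined over $\mathbb Q(t)(\sqrt{-1})=\mathbb Q(i,t)$. Precomposing it with $\phi_t$ yields the required isogeny $A'$ over $\mathbb Q(i,t)$ lifting the action of $A$. Alternatively one can bypass the twist bookkeeping entirely: write down, as the computations behind this paper in fact do, a candidate morphism $E_8(t)\to E_8\!\left(\tfrac{1-t}{1+t}\right)$ with coefficients in $\mathbb Q(i,t)$ and check directly that it is an isogeny whose kernel is cyclic of order $8$ and which covers $t\mapsto\tfrac{1-t}{1+t}$.

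The main obstacle is precisely this last identification. Running the three V\'elu steps is routine, but one must verify that the discrepancy between the computed quotient and the standard model $E_8\!\left(\tfrac{1-t}{1+t}\right)$ is a twist by $-1$ and by nothing larger, so that the field of definition is exactly $\mathbb Q(i,t)$. The appearance of $i$ is natural: it reflects the $\mathbb Q(i)$-structure already visible on the $\Gamma_1(4)$-side (compare \S\ref{SS:group3}), the Atkin--Lehner operation interacting with the level-$4$ data through a primitive fourth root of unity. This is the step where the explicit algebra --- and, as elsewhere in this paper, computer algebra --- does the real work.
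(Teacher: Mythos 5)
Your proposal follows essentially the same route as the paper: the paper computes the quotient $E_8(t)/C$ by the universal order-$8$ subgroup as a composite of three explicit $2$-isogenies over $\Q(t)$ (the maps $\psi',\psi'',\psi'''$, exactly your V\'elu factorization), and then writes down the explicit isomorphism $\phi\colon E_8'(t)\to E_8\bigl(\tfrac{1-t}{1+t}\bigr)$ whose formulas involve $i$, so that $A'=\phi\circ\psi$ is defined over $\Q(i,t)$ --- precisely your ``identify the quadratic twist and trivialize it over $\Q(i,t)$'' step, carried out by the same computer-algebra computation you defer to. So your outline is correct and matches the paper's proof in substance.
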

\begin{proof}
It
can be shown that there is no isogeny $E_8(t) \to E_8
(\frac{1-t}{1+t})$ defined over $\Q (t)$. One way to see this is by
specializing $t$ to have rational values. These calculations were carried out using Magma and Mathematica.
First one calculates the quotient curve $E_8 (t)/C$ where $C$ the
universal subscheme giving the cyclic group of order $8$. This is
the subgroup scheme defined by $(x^2 -4tx-4t^3)(x+t^2)x = 0$. This
gives the curve $E_8 '(t):  y^2 + 4xy + 4t^2y  = x^3 + t^2x^2 +
b(t)x + c(t)$, where $b(t) = -5t^4 - 320t^3 - 720t^2 - 320t,$ and $c(t) = 3t^6 - 704t^5
- 5184t^4 - 8896t^3 -5888t^2 - 1024t$,
and an explicit isogeny $\psi : E_8 (t)\to E_8 '(t)$ defined over
$\Q (t)$. Next one constructs an isomorphism $\phi : E_8 '(t) \to
E_8 (\frac{1-t}{1+t})$. It is
\[
  (x, y) \mapsto (\phi _1(x, y), \phi _2 (x, y))
\]
where
\begin{eqnarray*}
&&\phi_1 (x, y) = -\frac{7 t^2+8 t+x+8}{4 (t+1)^2},\\
&&\phi_2 (x, y) = \frac{12 t^3+2 (i+38) t^2+4 (x+20) t+2 (i+2) x+i
y+16}{8
   (t+1)^3}.
\end{eqnarray*}

The isogeny is $\phi\circ \psi$. More explicitly, we can write $\psi
= \psi'''\circ \psi''\circ \psi'$ where
\begin{align*}
& \psi '_1 (x, y) = \frac{t^4+x t^2+x^2}{t^2+x}, \quad  \psi'_2 (x,
y) = \frac{-4 t^6-4 x t^4+2 x y t^2+x^2 y}{\left(t^2+x\right)^2}
 \\
& \psi ''_1 (x, y) =\frac{t^2 (x-16)-x^2}{t^2-x}, \quad  \psi''_2
(x, y) = \frac{y t^4-2 (8 y+x (y+32)) t^2+x^2
y}{\left(t^2-x\right)^2},
& \\
& \psi '''_1 (x, y) = \frac{-64 t^3+(x-128) t^2+8 (x-8) t-x^2}{t^2+8
 t-x}\\
& \psi'''_2 (x, y) = \tiny {\frac{P(x,y)}{\left(t^2+8 t-x\right)^2}},
\end{align*}$P(x,y)=(y+1024) t^4-16 (16 x+3 y-128)
t^3-2 (32 (y-16)+x (y+256))
   t^2-16 (4 y+x (y+16)) t+x^2 y$.
\end{proof}

\subsection{}
\label{SS:corr5}
The involution $A$ of the modular curve $X(\Gamma _0(8)\cap \G_1(4))$  lifts to an involution
of the curve $X(\Gamma) $ where $\Gamma\subset \Gamma _0 (8)\cap \G_1(4)$ is defined in \S \ref{SS:group4}.  
Let $r_4$ be the Hauptmodul for this curve with $t = 4 r_4^3 -1$. Under the action of $A$,
$r_4 \mapsto 1/2r_4$.  By base-change, the isogeny $A$ defined in the above proposition
lifts to an isogeny of the elliptic curve $E(\Gamma )$, which we will denote by $E(r_4)$.
This map defined over $\Q (i)$.
For the purposes of the ASD congruences, we need that the curve $X(\Gamma )$ has a
$\Q$-rational cusp corresponding to $\tau = i \infty$; unfortunately, this is not the
case for this model: the point $\tau = i\infty $ corresponds to $t = 1$ and there is no
$\Q$ -rational $r_4$ with $t = 1 = 4r_4^3 -1$. In the model
with $t = 2r_2 ^3 -1$ corresponding to the representation
$\rho _{\ell, 2}$ (details see \S \ref{S:modular}), there is a $\Q$-rational point over $t=1$, namely $r_2=1$. But in this model,
the involution $A$ is now given by
$r_2 \mapsto 1/\sqrt[3]{2}r_2$, and this gives an isogeny of $E(\Gamma) $ defined
over the larger field $\Q (i, \sqrt[3]{2})$.  In either of these models,
$\zeta $ acts on the Hauptmodul by  $r_a \mapsto \exp{(2 \pi i/3)}\, r_a$.

\subsection{}
\label{SS:corr6}
On the $p$-adic space $V_p$ (see sections \ref{SS:intro2} and \ref{SS:aswd1}), we define operators
\[
B_{-1}  = A, \quad B_{-3} = \zeta - \zeta ^2, \quad B_3  = A(\zeta - \zeta ^2),
\]
where $A$ and $\zeta$ are given in \S \ref{SS:group4}.

\begin{prop}
\label{P:padic}
  $B_{-1}^2 = -8$;   $B_{-3}^2 = -3$;  $B_{3}^2 = -24$;
 $B_{-1}B_{-3} = - B_{-3}B_{-1} $.
\end{prop}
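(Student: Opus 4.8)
The plan is to recognize the four displayed identities as the defining relations of the quaternion algebra $\left(\frac{-8,\,-3}{\Q}\right)$, so that the proposition amounts to checking that $B_{-1}$ and $B_{-3}$ (with $B_3=B_{-1}B_{-3}$) generate such an algebra inside $\mathrm{End}(V_p)$ --- this is the quaternion multiplication structure announced in the abstract. Everything reduces to three facts about the operators $A$ and $\zeta$ on $V_p$: (i) $A^2=-8$; (ii) $\zeta^3=1$ and $\zeta$ has no nonzero fixed vector in $V_p$, so that $\zeta^2+\zeta+1=0$ on $V_p$; and (iii) $A\zeta A^{-1}=\zeta^{-1}$. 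Granting (i)--(iii) the proposition follows formally. First $B_{-1}^2=A^2=-8$. Next, using $\zeta^3=1$ and $\zeta^2+\zeta+1=0$,
\[
B_{-3}^2=(\zeta-\zeta^2)^2=\zeta^2-2\zeta^3+\zeta^4=\zeta^2+\zeta-2=(\zeta^2+\zeta+1)-3=-3 .
\]
Since $A^2$ is a scalar, conjugation by $A$ and by $A^{-1}$ both carry $\zeta$ to $\zeta^{-1}=\zeta^2$, so
\[
(\zeta-\zeta^2)A=A\bigl(A^{-1}\zeta A-A^{-1}\zeta^2A\bigr)=A(\zeta^2-\zeta)=-A(\zeta-\zeta^2),
\]
i.e.\ $B_{-1}B_{-3}=-B_{-3}B_{-1}$; and finally $B_3^2=(B_{-1}B_{-3})^2=B_{-1}(B_{-3}B_{-1})B_{-3}=-B_{-1}^2B_{-3}^2=-(-8)(-3)=-24$.

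For (i) and (iii) I would read off everything from the explicit descriptions in \S\S\ref{SS:corr4}--\ref{SS:corr5}. As matrices $A^2=-8\,I_2$, and the scalar matrix $-8\,I_2$ induces on $E(\Gamma)$ the fibrewise multiplication map $[-8]$ lying over $\mathrm{id}_{X(\Gamma)}$; since $[-8]^{*}$ acts by $-8$ on $R^1f_*$, and $V_p$ for weight $3$ (i.e.\ $k=1$) is built from $R^1f_*$ itself rather than a higher symmetric power, (i) follows --- here it matters that the operators carry no determinant normalization, as in \S\ref{SS:corr3}. For (iii), on $X(\Gamma)$ the involution $A$ sends the Hauptmodul by $r_a\mapsto c/r_a$ and $\zeta$ by $r_a\mapsto e^{2\pi i/3}r_a$, so $A\zeta A^{-1}$ sends $r_a\mapsto e^{-2\pi i/3}r_a$; thus $A\zeta A^{-1}$ and $\zeta^{-1}$ induce the same automorphism of $\Q(r_a)$ and of the family $E(\Gamma)$, hence the same operator on $V_p$ (equivalently $A\zeta A^{-1}\zeta^{-1}\in\Gamma$, which acts trivially on $V_p$). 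That $\zeta^3=1$ on $V_p$ is the statement $\zeta^3\in\Gamma$ recorded in \S\ref{SS:group4}.

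The only step with genuine content is that $\zeta$ has no nonzero fixed vector on $V_p$, i.e.\ that no nonzero weight-$3$ cuspform for $\Gamma$ descends to the quotient $\Gamma_0=\Gamma_0(8)\cap\Gamma_1(4)$, so that $S_3(\Gamma_0)=0$. This is reflected in the explicit basis $\{h_1,h_2\}$ of \S\ref{SS:group5}: each $h_i=\sqrt[3]{H_i}$ is a genuine cube root of a form on $\Gamma_0$, on which $\zeta$ --- scaling $r_a$ by a primitive cube root of unity --- acts by a nontrivial cube root of unity; it can also be confirmed against the classical dimension formulas, since $S_3(\Gamma_0)=S_3(\Gamma_0(8),\chi_{-4})=0$. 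Consequently $\zeta$ acts on $S_3(\Gamma)$, and hence on $V_{p,\C}=S_3(\Gamma)\oplus\overline{S_3(\Gamma)}$, with eigenvalues among the primitive cube roots of unity; since $\zeta^3-1=(\zeta-1)(\zeta^2+\zeta+1)$ and $\zeta-1$ is then invertible on $V_p$, we obtain $\zeta^2+\zeta+1=0$, which is (ii). I expect this no-fixed-vector fact --- together with the care needed to verify that (i) and (iii) hold for the operators on the crystalline space $V_p$ on the nose, and not merely modulo $\Gamma$ --- to be where the real work lies; the quaternion relations themselves are then immediate from the computations in the first paragraph.
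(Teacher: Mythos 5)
Your argument is correct in substance, but it takes a genuinely different route from the paper. The paper first reduces the identities to the action on $S_3(\Gamma)$ (via Scholl's motive: the operators act on the motive whose crystalline realization is $V_p$ and whose Hodge $(2,0)$ piece is $S_3(\Gamma)$), and then simply writes down the matrices of $\zeta$ and $A$ in the basis $h_1,h_2$ — namely $\zeta=\mathrm{diag}(\omega_3,\omega_3^{-1})$ and $A$ antidiagonal with entries $i2^{4/3}, i2^{5/3}$, the latter computed from the stroke operator and $\eta$-function identities — after which all four relations are immediate. You instead derive the relations formally from three structural facts: $A^2=-8$, $\zeta^3=1$ with no nonzero fixed vectors (equivalently $S_3(\Gamma_0(8)\cap\Gamma_1(4))=0$, which is correct and is consistent with the paper's eigenvalues $\omega_3^{\pm1}$ on $h_1,h_2$), and $A\zeta A^{-1}=\zeta^{-1}$ read off from the action on the Hauptmodul. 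What your route buys is that it avoids the explicit $\eta$-product computation of $A$'s matrix; what it costs is that you must separately certify the vanishing of $S_3(\Gamma_0)$ and the absence of normalization/sign issues in $A^2=-8$ and in the conjugation relation (your check that the relevant scalar is $-8$, not $\pm8$ up to a determinant factor, does work out, and since $A\zeta A^{-1}\in\Gamma_0(8)\cap\Gamma_1(4)$ and $-I\notin\Gamma_0(8)\cap\Gamma_1(4)$, its class mod $\Gamma$ is indeed detected by the action on $Y(\Gamma)$). Two small corrections: the parenthetical membership should read $A\zeta A^{-1}\zeta\in\Gamma$, not $A\zeta A^{-1}\zeta^{-1}\in\Gamma$ (the latter would say conjugation by $A$ fixes the class of $\zeta$); and the statement ``$V_{p,\C}=S_3(\Gamma)\oplus\overline{S_3(\Gamma)}$'' conflates the crystalline space with the Betti/de Rham realization — to transport your eigenvalue argument (and facts (i), (iii)) to $V_p$ you need exactly the comparison through Scholl's motive that the paper invokes at the start of its proof, so that reduction step cannot be dispensed with.
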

\begin{proof}
To prove the identities, it suffices to prove them for the
corresponding operators on $S_3 (\Gamma)$.
{
 The reason is that this is the Hodge $(2, 0)$
component of $H _{DR} (\phantom{}_{\Gamma }^3\mathcal{W})$ for Scholl's motive
(see \cite{Sch85iii}), these operators act on this motive, which is
a natural factor of $H^2 (\E _{\G})$, and $V_p = H_{cris} (\phantom{}_{\Gamma }^3\mathcal{W})$.}
\footnote{Scholl only constructs $\phantom{}_{n }^k\mathcal{W}$ for principal congruence subgroups
of level $n$. But the construction also works here: it is the Grothendieck motive which is
formal image of the projector denoted $\Pi _{\varepsilon}$ in loc. cit. acting on the elliptic surface
$\E_{\G, a}$, now regarded as a scheme over $\Q$.}
The effect of $\zeta $ on the basis $h_1, h_2$ (see \eqref{eq:h1&2}) is given by the matrix
\[
\zeta:=\begin{pmatrix}
  \omega_3 &0\\0&\omega_3^{-1}
\end{pmatrix}
\]
where $\omega_3$ is a primitive cubic root of unity. To compute the effect of
$A$ we  use the stroke operator defined in section (3.3) on $H_1 = h_1 ^3$, $H_2 = h_2 ^3$,
and use
properties of the $\eta$-function. The result is, in the basis $h_1, h_2$
\[
 A:=\begin{pmatrix}   0& i 2^{4/3}\\
i 2^{5/3}&0
\end{pmatrix}
\]
These identities follow immediately.
\end{proof}

\section{$\ell$-adic representations}
\label{S:modular}
In this paper, for any place $v$ of a number field $K$, we use $\text{Fr}_v$ and $\Frob_v:=\text{Fr}_v^{-1}$  to denote the corresponding arithmetic Frobenius and geometric Frobenius, respectively. We use $\ell$ to denote an arbitrary prime, unless it is specified.

\subsection{}
\label{SS:modular0}
If $j: Y(\G) \to X(\G) $ is the inclusion, we define
\[
 W_{\ell,a} = H^1 (X(\Gamma) \otimes \overline{\Q}, \  j_* R^1 f_{a*} \Q _\ell)
\] which is a 4-dimensional $\Q_{\ell}$-space,
where \'etale cohomology is understood, and we are using the symbols
$ Y(\G)$ and $ X(\G)$ to denote the schemes over $\Q$ whose $\C$-valued points were
previously denoted by these letters. We thus obtain a continuous $\ell$-adic representation
\[
\rho_{\ell,a} : G_{\Q} \to \mathrm{GL}(W_{\ell,a}) = \mathrm{GL}_4( \Q
_\ell).
\]
Let $N(a)$ be the least common multiple of the
numerator and denominator of $a$. This representation is unramified
outside of $2,3,\ell, N(a)$, and for all primes $p\nmid 6\ell N(a)$, we
have the characteristic polynomial of Frobenius
\[
\det (X - \rho _{\ell,a} (\Frob _p)  )=\text{Char}(\Frob_p, W_{\ell,a},X) = H_{p,a}(X) \in \Z [X].
\] A useful fact is that the roots of  $H_{p,a}(X)$ have the same absolute value (cf. \cite{Sch85ii}),
which is $p$ in this case.

\subsection{}
\label{SS:modular.5}
When $a=4$, it was first observed experimentally  that there are factorizations
\[
H_{p, 4}(X) = g_{p,4}(X) \overline {g_{p,4}(X)},
\]
where the bar notation stands for complex conjugation and the coefficients of the quadratic polynomials $g_{p,4}(X)$ lie
respectively in the fields $\Q$, $\Q (\sqrt {-3})$,
$\Q (\sqrt {-2})$, $\Q (\sqrt {-6})$ as $p \equiv 1, 7, 5, 11$
mod $12$. This is a property that follows from $\rho_{\ell,4}$ satisfying the so-called quaternion multiplication over the biquadratic field $\Q(\sqrt{-3},\sqrt{-2})$ in the sense of \cite[\S 3]{ALLL09}. To be more precise, we consider the following maps on $W_{\ell,4}\otimes_{\Q_\ell}\overline{\Q}_\ell$. Let $\zeta^*$ be the map on $W_{\ell,4}$ induced by $\zeta$ and $A^*$ be the map induced by $A$ (see \S 3.4 and a related discussion
in \cite[\S 5]{ALLL09}).
It sends $E(\G)$ to an isogenous elliptic curve over $\Q(i,r_4)$. It is obvious that $\zeta^*$ is defined over $\Q(\sqrt{-3})$. By \S \ref{SS:corr4}, $A^*$ is defined over $\Q(i)$. We define the following operators on $W_{\ell,4}\otimes_{\Q_\ell}\overline{\Q}_\ell$: $B_{-1}^*=A^*, B_{-3}^*=\zeta^*-(\zeta^*)^2$; as well as \[
J_{-1}  = \frac 1{\sqrt 8} B_{-1}^*, \quad  J_{-3} = \frac 1
{\sqrt{3}}B_{-3}^*, \quad J_3 = J_{-1} J_{-3}
\]
These depend on choices of embeddings $\sqrt{8}, \sqrt{3} \in \overline{\Q}_\ell$,
but the decompositions that follow below do not depend on these choices.
\begin{prop}
 \label{P:corr2}
1. $J_{-1}^2 = -1$;   $J_{-3}^2 = -1$;
 $J_{-1}J_{-3} = -J_{-3}J_{-1}$.

2. {When $a = 4$, for $s \in \{-1, -3, 3 \}$},  $J_s \rho_{\ell, 4} = \varepsilon _s \rho_{\ell, 4} J_s$ where
$\varepsilon _s : G_{\Q} \to \C ^*$ is the quadratic character of $G_\Q$ whose kernel is $ G_{\Q (\sqrt{s})}$.


\end{prop}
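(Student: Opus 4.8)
The proof naturally splits into the two assertions. For part~1, I would first reduce the identities on $W_{\ell,4}\otimes\overline{\Q}_\ell$ to the corresponding identities on the Hodge $(2,0)$-piece $S_3(\Gamma)$, exactly as in the proof of Proposition~\ref{P:padic}: these operators act on Scholl's motive $\phantom{}_{\Gamma}^3\mathcal{W}$, whose de Rham realization contains $S_3(\Gamma)$ as the $(2,0)$-part of a pure Hodge structure, so an endomorphism of the motive that acts as a scalar (resp.\ anticommutes) on $S_3(\Gamma)$ does so on all of $W_{\ell,4}$. Then I would invoke Proposition~\ref{P:padic}, which already gives $B_{-1}^2=-8$, $B_{-3}^2=-3$, and $B_{-1}B_{-3}=-B_{-3}B_{-1}$ for the operators on $S_3(\Gamma)$ (and $B_{-1}^*$, $B_{-3}^*$ are the cohomological avatars of the same correspondences). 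Dividing by the chosen square roots $\sqrt8,\sqrt3$ immediately yields $J_{-1}^2=J_{-3}^2=-1$ and the anticommutation; note $J_3^2=(J_{-1}J_{-3})^2=-J_{-1}^2J_{-3}^2=-1$ as well, by the anticommutation relation.

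**Part 2: intertwining with $\rho_{\ell,4}$.** The key point is to understand how Galois acts on each of the correspondences $B_s^*$. The operator $B_{-1}^*=A^*$ comes from the isogeny $A':E_8(t)\to E_8(\tfrac{1-t}{1+t})$, which by Proposition~\ref{P:corr1} and \S\ref{SS:corr4}–\ref{SS:corr5} is defined over $\Q(i)$ but not over $\Q$; concretely, applying an element $\gamma\in G_\Q$ to the coefficients of the isogeny either fixes it (if $\gamma\in G_{\Q(i)}$) or replaces it by $-A'$, equivalently composes with $[-1]$ on the elliptic curve, if $\gamma\notin G_{\Q(i)}$ — one reads this off the explicit formula for $\phi_2$, whose only non-rational coefficients are the $i$'s. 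Hence $\gamma A^*\gamma^{-1}=\varepsilon_{-1}(\gamma)A^*$ as operators on $W_{\ell,4}$, i.e.\ $A^*\rho_{\ell,4}(\gamma)=\varepsilon_{-1}(\gamma)\rho_{\ell,4}(\gamma)A^*$. Similarly $\zeta^*$ is defined over $\Q(\sqrt{-3})$: the matrix $\zeta$ acts on $r_a$ by $r_a\mapsto e^{2\pi i/3}r_a$, so $\gamma$ sends $\zeta^*\mapsto\zeta^*$ or $\zeta^*\mapsto(\zeta^*)^2$ according as $\gamma$ fixes or moves $\sqrt{-3}$; therefore $\zeta^*-(\zeta^*)^2=B_{-3}^*$ is sent to $-(B_{-3}^*)$ exactly when $\gamma\notin G_{\Q(\sqrt{-3})}$, giving $B_{-3}^*\rho_{\ell,4}(\gamma)=\varepsilon_{-3}(\gamma)\rho_{\ell,4}(\gamma)B_{-3}^*$. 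Dividing through by the (Galois-fixed) scalars $\sqrt8,\sqrt3$ turns these into $J_{-1}\rho_{\ell,4}=\varepsilon_{-1}\rho_{\ell,4}J_{-1}$ and $J_{-3}\rho_{\ell,4}=\varepsilon_{-3}\rho_{\ell,4}J_{-3}$. For $s=3$: $J_3\rho_{\ell,4}(\gamma)=J_{-1}J_{-3}\rho_{\ell,4}(\gamma)=\varepsilon_{-1}(\gamma)\varepsilon_{-3}(\gamma)\rho_{\ell,4}(\gamma)J_{-1}J_{-3}=\varepsilon_3(\gamma)\rho_{\ell,4}(\gamma)J_3$, since $\varepsilon_{-1}\varepsilon_{-3}=\varepsilon_3$ (the product of two quadratic characters corresponding to $\Q(i)$ and $\Q(\sqrt{-3})$ cuts out $\Q(\sqrt3)$).

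**The main obstacle.** The delicate point is tracking the precise Galois descent data for the two correspondences — i.e.\ verifying that the Galois-conjugate isogeny is exactly $-A'$ (not, say, $A'$ composed with some automorphism that happens to act trivially on cohomology anyway, or with a nontrivial twist) and that $\gamma$ genuinely swaps $\zeta^*$ and $(\zeta^*)^2$ rather than fixing $\zeta^*$. This requires being careful that (a) the "extra" automorphism $A''$ in the factorization $p_1^*=A''\circ A'$ is defined over $\Q$ and hence contributes nothing, so the $i$-dependence is entirely in $A'$; and (b) on the cohomology $W_{\ell,4}$ the relation "$\gamma$-conjugate of $A^*$ equals $\varepsilon_{-1}(\gamma)A^*$" holds as an \emph{equality of endomorphisms}, which is where one uses that $[-1]$ acts on $H^1$ of each elliptic fiber, hence on $R^1f_{4*}\Q_\ell$ and on $W_{\ell,4}$, as $-\mathrm{id}$. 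Once this functoriality-of-Galois-action bookkeeping is set up cleanly (it is implicit in \cite[\S5]{ALLL09}), the rest is formal. I would also remark that the conclusion is independent of the choices of $\sqrt8,\sqrt3\in\overline{\Q}_\ell$ because changing a sign replaces $J_s$ by $-J_s$, which affects neither $J_s^2$ nor the intertwining relation.
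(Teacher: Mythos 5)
Your proposal is correct and follows essentially the same route as the paper: part 1 is reduced, via Scholl's motive and the comparison isomorphisms, to the identities of Proposition \ref{P:padic} established on $S_3(\Gamma)$, and part 2 rests on the fields of definition of $B_{-1}^*=A^*$ and $B_{-3}^*=\zeta^*-(\zeta^*)^2$, with the scalars $\sqrt{8},\sqrt{3}$ placed in the Galois-trivial factor $\overline{\Q}_\ell$. The only difference is one of detail: where the paper simply cites the minimal fields of definition $\Q(i)$, $\Q(\sqrt{-3})$, $\Q(\sqrt{3})$, you spell out the underlying descent data (the Galois conjugate of $A'$ is $[-1]\circ A'$, acting as $-1$ on $H^1$, and conjugation sends $\zeta^*$ to $(\zeta^*)^2$), which is exactly the content the paper leaves implicit.
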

\begin{proof}
(1) In the proof of Proposition \ref{P:padic} these identities were shown
to hold on   $H _{DR} (\phantom{}_{\Gamma }^3\mathcal{W}) $. By comparison isomorphisms, these also
hold on $H _{\ell}(\phantom{}_{\Gamma }^3\mathcal{W} )= W_{\ell, a}$.

(2) The Galois group $G_{\Q}$ interacts with the operators $J_{\pm 3}, J_{-1}$ in the same
way that it interacts with the $B_{-1}^*, B_{-3}^*, B_3^*$, in other words the irrationalities
  $\sqrt{8}, \sqrt{3}$ do not affect the Galois action: the reason is that
one treats the second factor in  $W_{\ell, 4}\otimes _{\Q_\ell}\overline{\Q}_\ell$
as the trivial Galois module.  Thus, {when $a = 4$,} by the discussion right before the proposition, $\Q(i)$ (resp. $\Q(\sqrt{-3})$,
{  $\Q(\sqrt{3})$}) is the minimal field of definition of $J_{-1}$ (resp. $J_{-3}$, { $J_3$}).  Thus the commutativity of $G_{\Q}$ and $J_{-1},J_{-3}, J_3$ is as claimed.
\end{proof}

\begin{cor}
 \label{C:modular1}
1. Let $D_{-1} = -2$,  $D_{-3} = -3$,  $D_{3} = -6$.
For each  $s\in \{-1, -3, 3 \}$, let  $K_s = \Q(\sqrt{D_s})$ and  $\lambda$ be the place of
$\Q _\ell (\sqrt{D_s})= K_{s, \lambda}$ lying over $\ell$. Then
{
\begin{equation}\label{eq:6}
 \rho_{\ell, 4}\otimes _{\Q_\ell}K_{s, \lambda} =
\mathrm{Ind}_{G_{\Q (\sqrt{s})}}^{G_{\Q}} (\sigma_{\lambda, 4, s} ), 
\end{equation}
}
for some 2-dimensional absolutely irreducible representation $\sigma_{\lambda, 4, s} $ of
{$G_{\Q (\sqrt{s})}$}.

2.  The determinant $\det \sigma_{\lambda, 4, s} =\varphi_s \cdot (\epsilon_\ell|_{G_{\Q(\sqrt s)}})^{2}$ where $\epsilon_\ell$ is the $\ell$-adic cyclotomic character and $\varphi_s$ is the quadratic character of $G_{\Q(\sqrt s)}$ with kernel $G_{\Q(i,\sqrt 3)}$.
\end{cor}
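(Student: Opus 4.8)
The plan is to read both assertions off the quaternion‑multiplication structure of Proposition~\ref{P:corr2}, using the classical device that turns an operator which intertwines $\rho_{\ell,4}$ with itself up to a quadratic character into an induction. Fix $s\in\{-1,-3,3\}$. By Propositions~\ref{P:padic} and~\ref{P:corr2} the operator $B_s^*$ on $W_{\ell,4}$ satisfies $(B_s^*)^2\in\{-8,-3,-24\}$, i.e.\ $(B_s^*)^2 = D_s c_s^2$ with $c_s\in\{2,1,2\}$, so its eigenvalues $\pm c_s\sqrt{D_s}$ already lie in $K_{s,\lambda}=\Q_\ell(\sqrt{D_s})$; over $K_{s,\lambda}$ write $W_{\ell,4}\otimes_{\Q_\ell}K_{s,\lambda}=W_s^+\oplus W_s^-$ for the corresponding eigenspace decomposition. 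Proposition~\ref{P:corr2}(2) gives $B_s^*\,\rho_{\ell,4}(g)=\varepsilon_s(g)\,\rho_{\ell,4}(g)\,B_s^*$ for all $g\in G_\Q$, so $G_{\Q(\sqrt s)}=\ker\varepsilon_s$ stabilises each $W_s^\pm$ while any $g$ with $\varepsilon_s(g)=-1$ interchanges them; since $\rho_{\ell,4}(g)$ is invertible this forces $\dim W_s^+=\dim W_s^-=2$ (in particular $B_s^*$ is not a scalar). This pair of facts is exactly the standard recognition criterion for an induced representation: with $\sigma_{\lambda,4,s}$ defined as the representation of $G_{\Q(\sqrt s)}$ on $W_s^+$, one gets $\rho_{\ell,4}\otimes_{\Q_\ell}K_{s,\lambda}\cong\mathrm{Ind}_{G_{\Q(\sqrt s)}}^{G_{\Q}}\sigma_{\lambda,4,s}$, which is \eqref{eq:6}.

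For absolute irreducibility I would argue by contradiction: if $\sigma_{\lambda,4,s}\otimes\overline\Q_\ell$ had a $G_{\Q(\sqrt s)}$-stable line, then $\rho_{\ell,4}\otimes\overline\Q_\ell$ would be a sum of two inductions of characters from $G_{\Q(\sqrt s)}$, and its restriction to $G_{\Q(i,\sqrt3)}$ would contain a one‑dimensional subrepresentation. But $G_{\Q(i,\sqrt3)}$ is, by Proposition~\ref{P:corr2}(2), precisely the subgroup over which all three operators $B_{-1}^*,B_{-3}^*,B_3^*$ commute with the Galois action; these generate over $\overline\Q_\ell$ the full matrix algebra $M_2(\overline\Q_\ell)$ acting faithfully on the $4$‑dimensional space $W_{\ell,4}\otimes\overline\Q_\ell$, so the commutant theorem gives $\rho_{\ell,4}\otimes\overline\Q_\ell\mid_{G_{\Q(i,\sqrt3)}}\cong\tau_0^{\oplus 2}$ for a single $2$‑dimensional representation $\tau_0$, and also $\sigma_{\lambda,4,s}\mid_{G_{\Q(i,\sqrt3)}}\cong\tau_0$. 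One is thus reduced to irreducibility of $\tau_0$, which follows from purity together with the experimental fact (see \S\ref{S:tab}) that $H_{p,4}(X)$ is irreducible over $\Q$ for a positive proportion of $p$ — equivalently, the non‑CM‑ness of the level‑$432$ newform $f$ — or may be quoted from the general analysis of such quaternion‑multiplication representations in \cite{ALLL09}.

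For part (2) I would combine the index‑two determinant formula $\det\bigl(\mathrm{Ind}_{G_{\Q(\sqrt s)}}^{G_{\Q}}\tau\bigr)=\varepsilon_s^{\dim\tau}\cdot(\det\tau\circ\mathrm{Ver})$ with the determinant of the Scholl representation $\rho_{\ell,4}$, which for weight $k=3$ and $d=2$ is $\epsilon_\ell^{4}$ up to a finite‑order character ramified only at $\{2,3,\ell\}$. Since $\dim\sigma_{\lambda,4,s}=2$, the sign‑character contribution $\varepsilon_s^2$ is trivial, so this determines $\det\sigma_{\lambda,4,s}\circ\mathrm{Ver}$ and hence $\det\sigma_{\lambda,4,s}$ up to a quadratic character of $G_{\Q(\sqrt s)}$. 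Restricting to $G_{\Q(i,\sqrt3)}$, where $\sigma_{\lambda,4,s}$ coincides with $\tau_0$ and $\det\tau_0=\epsilon_\ell^{2}\mid_{G_{\Q(i,\sqrt3)}}$ (visible from the Hodge–Tate weights $\{0,2\}$ of $\tau_0$, or from the constant terms $p^2$ of the factors $g_{p,4}$ at primes split in $\Q(i,\sqrt3)$), one concludes $\det\sigma_{\lambda,4,s}=\varphi_s\cdot(\epsilon_\ell\mid_{G_{\Q(\sqrt s)}})^{2}$ where $\varphi_s$ factors through $\mathrm{Gal}(\Q(i,\sqrt3)/\Q(\sqrt s))$, hence is either trivial or the quadratic character with kernel $G_{\Q(i,\sqrt3)}$. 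To see that $\varphi_s$ is the nontrivial one it suffices to evaluate $\det\sigma_{\lambda,4,s}$ at a single geometric Frobenius $\Frob_{\mathfrak p}$ for a prime $\mathfrak p$ of $\Q(\sqrt s)$ lying over a rational prime $p$ that splits in $\Q(\sqrt s)$ but whose Frobenius is nontrivial in $\mathrm{Gal}(\Q(i,\sqrt3)/\Q(\sqrt s))$: for such $p$ the constant term of the corresponding quadratic factor $g_{p,4}$ recorded in \S\ref{S:tab} is $-p^2$, not $p^2$.

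The formal steps above — the eigenspace decomposition, the recognition of the induction, and the shape $\varphi_s\cdot\epsilon_\ell^{2}$ of the determinant modulo a quadratic twist — are immediate consequences of Proposition~\ref{P:corr2} and the determinant of Scholl's motive. The two points that require genuine input are (a) upgrading irreducibility of $\rho_{\ell,4}$ to \emph{absolute} irreducibility of $\sigma_{\lambda,4,s}$, which forces one to use actual Frobenius data (or cite \cite{ALLL09}), and (b) pinning down $\varphi_s$ itself, since the trivial and the nontrivial quadratic character of $\mathrm{Gal}(\Q(i,\sqrt3)/\Q(\sqrt s))$ cannot be separated by any formal manipulation of the induction and determinant formulas and must be distinguished by an explicit computation at a well‑chosen prime. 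I expect (b) to be the real obstacle.
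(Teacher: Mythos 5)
Your overall strategy is the one the paper follows. For part (1), you verify directly from $B_s^*\rho_{\ell,4}(g)=\varepsilon_s(g)\rho_{\ell,4}(g)B_s^*$ that the two $B_s^*$-eigenspaces are $G_{\Q(\sqrt s)}$-stable, necessarily of equal dimension, and swapped by any $g$ with $\varepsilon_s(g)=-1$, and then invoke the recognition criterion for an index-two induction; the paper packages the same facts as an application of Clifford's theorem (observing that the restriction of $\rho_{\ell,4}$ decomposes as $\sigma_{\lambda,4,s}\oplus\sigma'_{\lambda,4,s}$ with the two pieces conjugate, absolutely irreducible, and non-isomorphic), which also delivers the relation $\sigma_{\lambda,4,s}=\sigma'_{\lambda,4,s}\otimes\chi_s$ used in part (2). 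For part (2), your determinant-of-induction computation and the paper's computation via $\sigma_{\lambda,4,s}=\sigma'_{\lambda,4,s}\otimes\chi_s$ together with $\det\rho_{\ell,4}=\epsilon_\ell^4$ are algebraically equivalent, and both pin down $\varphi_s$ by ramification considerations plus an explicit Frobenius from Table \ref{T:moddata2}. So the route is essentially the same.

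The one place your argument diverges and is actually weaker than you suggest is the absolute irreducibility of $\sigma_{\lambda,4,s}$. Your reduction — that the three $J_s$ generate a split quaternion algebra over $\overline{\Q}_\ell$ commuting with $G_{\Q(i,\sqrt3)}$, so that $\rho_{\ell,4}\mid_{G_{\Q(i,\sqrt3)}}\cong\tau_0^{\oplus2}$ and $\sigma_{\lambda,4,s}\mid_{G_{\Q(i,\sqrt3)}}\cong\tau_0$ — is a nice structural observation not made explicit in the paper, and it correctly reduces the claim to the absolute irreducibility of the single representation $\tau_0$. But the step you then offer, ``purity together with the fact that $H_{p,4}(X)$ is irreducible over $\Q$ for a positive proportion of $p$,'' does not obviously finish the job: $\tau_0$ lives on $G_{\Q(i,\sqrt3)}$, and the primes where $H_{p,4}$ is $\Q$-irreducible are exactly those $p\not\equiv1\pmod{12}$, which do not split in $\Q(i,\sqrt3)$. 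At the primes $p\equiv1\pmod{12}$ (the ones that directly see $\tau_0$ through degree-one Frobenii), $H_{p,4}$ is a perfect square $g_{p,4}(X)^2$ with $g_{p,4}\in\Z[X]$, and such data alone are consistent with $\tau_0$ being a sum of two algebraic Hecke characters, since those also satisfy purity. To actually rule out $\tau_0^{\rm ss}=\chi_1\oplus\chi_2$ one must use the multiplicativity of characters on several Frobenii, or else fall back on the alternative you mention, namely citing the quaternion-multiplication analysis of \cite{ALLL09}. The paper is equally terse here (``straightforward to check from Table \ref{T:moddata2}''), so you should not view the gap as fatal, but the sentence about $H_{p,4}$ being irreducible over $\Q$ does not, as written, prove what you need.
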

\begin{proof}
1. 
For each $s$ the eigenspaces of $B_s ^*$ are
each two dimensional, defined over $K_s$, invariant under $G_{\Q (\sqrt{s})}$. Thus $\rho_{\ell, 4}\otimes _{\Q_\ell}K_{s, \lambda}|_{G_{K_s}}=\sigma_{\lambda, 4, s} \oplus \sigma_{\lambda, 4, s} '$ where $\sigma_{\lambda, 4, s}$ and its conjugate $\sigma_{\lambda, 4, s} '$ are 2-dimensional. It is straightforward to check, by using the data listed in Table \ref{T:moddata2}, they are absolutely irreducible and non-isomorphic. Thus \eqref{eq:6} follows from Clifford's result in \cite{Clifford37}. Moreover, by \cite{Clifford37} one knows $\sigma_{\lambda, 4, s}= \sigma_{\lambda, 4, s} '\otimes \chi_s$ where $\chi_s$ is the quadratic character of $G_\Q$ with fixed field $K_s$.

2. By a direct calculation, one knows that $\det \rho_{\ell, 4}=\epsilon_\ell^{4}$. It follows from $\sigma_{\lambda, 4, s}= \sigma_{\lambda, 4, s} '\otimes \chi_s$ that  $\det \sigma_{\lambda, 4, s} =\varphi_s \cdot (\epsilon_\ell|_{G_{\Q(\sqrt s)}})^{2}$ for some character $\varphi_s$ of {$G_{\Q(\sqrt{s})}$} of order at most 2. From the data (Table \ref{T:moddata2}) and the fact that $\varphi_s$ only ramifies at places of {$\Q(\sqrt{s})$} above 2 and 3, we can conclude that the fixed field of $\varphi_s$ is $G_{\Q(i,\sqrt 3)}$.\end{proof}

\begin{theorem}\label{thm:mod-main-1}
 \label{C:modular2}
1. The semi-simplification of $\rho _{\ell, 4}^{\vee}$, the dual   of $\rho _{\ell, 4}$, is automorphic, i.e.
the $L$-function of $\rho _{\ell, 4}^{\vee}$ is equal to the $L$-function of an automorphic
representation of $\text{GL}_4(\mathbb A_\Q)$.

2. To be more precise, $L(s, \rho _{\ell, 4}^{\vee})=L(s, (\rho_f|_{G_{\Q(i)}})\otimes \chi)$ for some level 432 Hecke newform $f$ and some quartic character $\chi$ of $G_{\Q(i)}$.
\end{theorem}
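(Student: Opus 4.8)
The plan is to deduce Theorem \ref{thm:mod-main-1} from Corollary \ref{C:modular1} together with the known automorphy of quaternion-multiplication representations from \cite{ALLL09}, and then to pin down the pair $(f,\chi)$ by a Faltings--Serre style comparison of $\ell$-adic representations. First I would take $s = -1$ in Corollary \ref{C:modular1}, so that over the quadratic extension $K_{-1,\lambda} = \Q_\ell(\sqrt{-2})$ we have
\[
\rho_{\ell,4}\otimes_{\Q_\ell}K_{-1,\lambda} \;=\; \mathrm{Ind}_{G_{\Q(i)}}^{G_{\Q}}\sigma_{\lambda,4,-1},
\]
with $\sigma_{\lambda,4,-1}$ a $2$-dimensional absolutely irreducible representation of $G_{\Q(i)}$ whose determinant (part 2 of the Corollary) is $\varphi_{-1}\cdot(\epsilon_\ell|_{G_{\Q(i)}})^2$. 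Dualizing and twisting by the cyclotomic character, $\sigma_{\lambda,4,-1}^{\vee}\otimes\epsilon_\ell^{2}$ is a $2$-dimensional representation of $G_{\Q(i)}$ with finite-order determinant; by the results of \cite{ALLL09} (which establish automorphy for $4$-dimensional Galois representations with quaternion multiplication over a biquadratic field, exactly the situation of Proposition \ref{P:corr2} and Corollary \ref{C:modular1}), this representation, after possibly untwisting by a Hecke character $\chi$ of $G_{\Q(i)}$, is modular: it is the $\ell$-adic representation attached to a cuspidal automorphic form on $\mathrm{GL}_2(\mathbb A_{\Q(i)})$. Since $\rho_{\ell,4}^{\vee}$ is the induction of this from $G_{\Q(i)}$ to $G_\Q$, automorphic induction (Arthur--Clozel) gives part 1: $L(s,\rho_{\ell,4}^{\vee})$ is the $L$-function of an automorphic representation of $\mathrm{GL}_4(\mathbb A_\Q)$.

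For part 2, I would show that the $\mathrm{GL}_2/\Q(i)$ form appearing above is in fact a base change from $\Q$, namely the base change of the Deligne representation $\rho_f$ of a weight-$3$ newform $f$ of level $432$, twisted by the quartic character $\chi$ of $G_{\Q(i)}$. The strategy is: (i) identify a candidate newform $f$ of level $432$ from the experimental data in \S\ref{S:tab} — the $p$th coefficient of $f$ matches, up to a twelfth root of unity, the linear term of $g_{p,4}(X)$, as described in \S\ref{SS:intro3}; (ii) form the two $4$-dimensional $G_\Q$-representations $\rho_{\ell,4}^{\vee}$ and $\mathrm{Ind}_{G_{\Q(i)}}^{G_\Q}\big[(\rho_f^{\vee}|_{G_{\Q(i)}})\otimes\chi^{-1}\big]$ (compare the diagram in \S\ref{SS:intro3}); (iii) check that these have the same determinant, the same (finite) set of ramified primes, and isomorphic residual representations, and then invoke the Faltings--Serre method: it suffices to check equality of traces of Frobenius at a finite, effectively computable set of primes outside the ramification locus. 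The bookkeeping here — computing the conductor of the putative automorphic representation, hence the level $432$ and the precise ramification set $\{2,3,\ell\}$, and determining $\chi$ explicitly (it is the quartic character of $G_{\Q(i)}$ governing the $p\bmod 12$ trichotomy) — is where the characters $\psi$, $\chi$ and the field $\Q(i,\sqrt3)$ from Corollary \ref{C:modular1} all enter. The residual isomorphism and the determinant match reduce the Faltings--Serre deformation-theoretic input to a finite trace comparison, which the tables in \S\ref{S:tab} supply.

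The main obstacle I anticipate is step (iii): making the Faltings--Serre comparison genuinely effective. One must (a) control the $2$- and $3$-adic ramification of $\rho_{\ell,4}$ well enough to bound the conductor and fix the finite set $S$ of "bad" primes precisely — here the explicit models $E_8(t)$ and the isogeny data of Proposition \ref{P:corr1} are needed to analyze the reduction behavior; (b) verify that the residual representations $\bar\rho_{\ell,4}^{\vee}$ and $\overline{\mathrm{Ind}(\rho_f^{\vee}|_{G_{\Q(i)}}\otimes\chi^{-1})}$ are isomorphic (or at least have isomorphic semisimplifications with suitable image), which controls the set of primes at which traces must be compared; and (c) carry out that finite trace comparison, for which the experimental data in \S\ref{S:tab} should already be sufficient, but it must be certified rather than merely observed. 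A secondary subtlety is that the quaternion action on $\rho_{\ell,2}$ is \emph{not} defined over a quadratic or biquadratic field (as noted in \S\ref{SS:intro3}), so the automorphy input from \cite{ALLL09} applies directly only to $\rho_{\ell,4}$; Theorem \ref{thm:mod-main-1} is precisely the statement for $\rho_{\ell,4}$, and the transfer to $\rho_{\ell,2}$ is deferred to the later theorems. Everything else — the induction structure, the determinant computation $\det\rho_{\ell,4}=\epsilon_\ell^4$, the absolute irreducibility of $\sigma_{\lambda,4,s}$ — is already in hand from Corollary \ref{C:modular1}.
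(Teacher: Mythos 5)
Your proposal follows essentially the same route as the paper: part 1 is quoted from \cite{ALLL09} (4.2.4--4.2.5) via the quaternion multiplication structure of Proposition \ref{P:corr2}, and part 2 is exactly the paper's Theorem \ref{thm:mod-main}, proved by explicitly identifying the level-432 eigenform $f$ and running the Faltings--Serre method, the only cosmetic difference being that the paper compares the $2$-dimensional representations $(\sigma_{\lambda,4,-1})^{\vee}$ and $\rho_f|_{G_{\Q(i)}}\otimes\chi$ over $\Q(i)$ (with coefficients in $\Q_2(\sqrt{-2})$) rather than their $4$-dimensional inductions to $G_{\Q}$. The step you flag as the main obstacle is precisely what the paper carries out: after matching determinants via Corollary \ref{C:modular1}, the residual image is pinned down as the $S_3$ cut out by $x^3-2$ over $\Q(i)$, and the possible deviation groups ($S_4$, via explicit quartics with cubic resolvent $x^3-2$ unramified outside $2$ and $3$, and $S_3\times\Z/2$, via the quadratic extensions of $\Q(i)$ unramified at $1+i$ and $3$) are eliminated by trace and characteristic-polynomial comparisons at a handful of places over $5$ and $13$, so the finite check is indeed supplied (and certified) by the data of \S\ref{S:tab}.
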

The first claim follows from 4.2.4 and 4.2.5 of \cite{ALLL09}. We will postpone the proof of the second claim to the next section. See Theorem \ref{thm:mod-main}.}
\subsection{}
When $a=2$, the operator $B_{-3}^*$ on $\rho_{\ell,2}$ is defined over $\Q(\sqrt{-3})$.   Because of it, $\rho_{\ell,2}\otimes_{\Q_\ell} K_{-3,\l}$ is also induced from a 2-dimensional representation $\sigma_{\l,2,-3} $ of $G_{\Q(\sqrt{-3})}$ (see \cite{long061}). The factorization of  $H_{p,2}(X)$ is given in Table \ref{T:moddata}.
 {
\begin{theorem}\label{thm:mod-main-2}
Let $K$ be the splitting field of $x^3-2$ over $\Q$. Then
  $\sigma_{\l,4,-3} =\sigma_{\l,2,-3} \otimes \psi$ where  $\psi$ is a cubic character of $G_{\Q(\sqrt
{-3})}$ with kernel $G_K$.
\end{theorem}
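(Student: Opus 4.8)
The plan is to compare the two representations $\sigma_{\l,2,-3}$ and $\sigma_{\l,4,-3}$ of $G_{\Q(\sqrt{-3})}$ directly via their local data, and then identify the twisting character. The two representations arise from the same group $\Gamma$ but from the models $t = 2r_2^3-1$ and $t = 4r_4^3-1$; these two models differ only by the substitution $r_2 \mapsto \sqrt[3]{2}\, r_4$, which is precisely the kind of Kummer twist that introduces a cubic character. So first I would set up, for each prime $p \nmid 6\ell$, the factorizations $H_{p,2}(X) = g_{p,2}(X)\overline{g_{p,2}(X)}$ and $H_{p,4}(X) = g_{p,4}(X)\overline{g_{p,4}(X)}$ coming from Corollary \ref{C:modular1} (and its analogue for $a=2$ noted in the text, following \cite{long061}), where $g_{p,a}$ is the characteristic polynomial of $\Frob_{\mathfrak p}$ acting via $\sigma_{\l,a,-3}$ for a place $\mathfrak p$ of $\Q(\sqrt{-3})$ above $p$. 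The data in Tables \ref{T:moddata} and \ref{T:moddata2} exhibit these factorizations explicitly.

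Next I would pin down the candidate character $\psi$. Since $K = \Q(\sqrt[3]{2},\sqrt{-3})$ is the splitting field of $x^3-2$ and $\Q(\sqrt{-3}) \subset K$ with $\Gal(K/\Q(\sqrt{-3}))$ cyclic of order $3$, there is (up to complex conjugation) a unique cubic character $\psi$ of $G_{\Q(\sqrt{-3})}$ with kernel $G_K$; concretely $\psi$ is the cubic residue character attached to $2$ over $\Z[\omega_3]$. The claim is then $\sigma_{\l,4,-3} \cong \sigma_{\l,2,-3}\otimes\psi$. I would verify this by checking equality of the two semisimple representations place by place: both are unramified outside the places of $\Q(\sqrt{-3})$ above $2,3,\ell$, both have the same determinant up to the cube $\psi^3=1$ (using Corollary \ref{C:modular1}(2), whose determinant is insensitive to a cubic twist), and for a sufficient finite list of split primes $p$ one reads off from the tables that $g_{p,4}(X) = \psi(\Frob_{\mathfrak p})^{?}\cdot(\text{rescaling of }g_{p,2}(X))$ in the manner forced by a twist by $\psi$ — i.e. $\mathrm{tr}\,\sigma_{\l,4,-3}(\Frob_{\mathfrak p}) = \psi(\Frob_{\mathfrak p})\,\mathrm{tr}\,\sigma_{\l,2,-3}(\Frob_{\mathfrak p})$. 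To upgrade a finite check to an identity of Galois representations I would invoke a Faltings–Serre / Livné-style argument (as is done elsewhere in the paper, cf. \S \ref{S:mod}): the two $2$-dimensional $\l$-adic representations $\sigma_{\l,4,-3}$ and $\sigma_{\l,2,-3}\otimes\psi$ of $G_{\Q(\sqrt{-3})}$ have the same determinant, are ramified only within an explicit finite set $S$ of places, and have residual images controlled by the tables; matching traces on a set of Frobenii whose images generate the relevant quotient of $G_{\Q(\sqrt{-3})}$ modulo the appropriate $2$-extension then forces them to be isomorphic (after semisimplification, which is harmless since both are absolutely irreducible by Corollary \ref{C:modular1}).

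Alternatively — and this is the route I would actually prefer to present — one can argue geometrically rather than through tables: the elliptic surfaces $\E_{\Gamma,2}$ and $\E_{\Gamma,4}$ are $\overline{\Q}$-isomorphic (indeed isomorphic over $K$) via the change of Hauptmodul $r_2 = \sqrt[3]{2}\,r_4$, which identifies the Scholl motives $_{\Gamma}^{3}\mathcal W$ for the two models. The discrepancy between the $G_{\Q(\sqrt{-3})}$-actions on the $\l$-adic realizations is therefore measured by the cocycle $g \mapsto g(\sqrt[3]{2})/\sqrt[3]{2} \in \mu_3$, which, composed with the action on the relevant eigenspace of $B_{-3}^*$, is exactly a cubic character of $G_{\Q(\sqrt{-3})}$ cutting out $K$; tensoring by it carries $\sigma_{\l,2,-3}$ to $\sigma_{\l,4,-3}$. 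I expect the main obstacle to be making this last descent argument genuinely rigorous: one must check that the $K/\Q(\sqrt{-3})$-descent data for the two models really do differ by the Kummer class of $2$ and not, say, by that of $2$ times some unit or by the trivial class on the $2$-dimensional piece in question — equivalently, that the twist is by $\psi$ and not by $\psi^2$ or by $1$. This is where the explicit tables are indispensable: a single split prime $p$ with $\psi(\Frob_{\mathfrak p})\neq 1$ (e.g. a $p$ with $2$ a cubic non-residue mod the prime of $\Z[\omega_3]$ above it) at which $g_{p,4}\neq g_{p,2}$ distinguishes $\psi$ from $\psi^2$ and from $1$, and fixes the statement.
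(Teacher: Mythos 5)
Your preferred (geometric) route is essentially the paper's argument: the proof in the paper likewise starts from the fact that the two elliptic modular surfaces become isomorphic over $K$, so that $\rho_{\ell,2}\mid_{G_K}\cong\rho_{\ell,4}\mid_{G_K}$, and it likewise uses the tables only to detect a nontrivial twist. The step you honestly flag as the ``main obstacle'' --- making the Kummer-cocycle descent rigorous --- is exactly where the paper substitutes a cleaner tool: one checks that $\sigma_{\l,2,-3}\mid_{G_K}$ and $\sigma_{\l,4,-3}\mid_{G_K}$ are absolutely irreducible, replaces $\sigma_{\l,2,-3}$ by its conjugate under an element of $G_\Q\setminus G_{\Q(\sqrt{-3})}$ if necessary so that the two restrictions coincide, and then invokes Clifford's theorem (Theorem 5 of \cite{Clifford37}) for the degree-$3$ cyclic extension $K/\Q(\sqrt{-3})$: two representations of $G_{\Q(\sqrt{-3})}$ with equal irreducible restriction to the normal subgroup $G_K$ are either isomorphic or differ by a character of $\Gal(K/\Q(\sqrt{-3}))\cong \Z/3$. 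This bypasses any analysis of which Kummer class the descent data differ by. Note also that your concern about separating $\psi$ from $\psi^2$ is not needed for the statement: both nontrivial cubic characters have kernel $G_K$, and the ambiguity is in any case absorbed by the choice of conjugate of $\sigma_{\l,2,-3}$; all the numerical data (Tables \ref{T:moddata}, \ref{T:moddata2}) must supply is that $\sigma_{\l,4,-3}\not\cong\sigma_{\l,2,-3}$, i.e.\ that the twist is nontrivial. Your first route (a Faltings--Serre/Livn\'e comparison of $\sigma_{\l,4,-3}$ with $\sigma_{\l,2,-3}\otimes\psi$) would also work in principle, since the determinants match and both are unramified outside an explicit finite set, but it is considerably heavier: one must control the residual representations and exhibit a provably sufficient set of Frobenius elements, whereas the Clifford argument reduces the computational input to a single prime witnessing non-isomorphy.
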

\begin{proof}
Note that  $\rho_{\ell,2}\mid_{G_K}$ and $\rho_{\ell,4}\mid_{G_K}$ are isomorphic as { the
corresponding elliptic modular surfaces become isomorphic over $K$}. It is routine to check that $\sigma_{\l,2,-3} \mid_{G_K}$ and $\sigma_{\l,4,-3} \mid_{G_K}$ are absolutely irreducible. Upon replacing $\sigma_{\l,2,-3}$ by its conjugate under any element in $G_\Q\setminus G_{\Q(\sqrt{-3})}$, we may assume that $$\sigma_{\l,2,-3}|_{G_K}=\sigma_{\l,4,-3}|_{G_K}.$$
By Theorem 5 of \cite{Clifford37},  $\sigma_{\l,2,-3} $ and  $\sigma_{\l,4,-3} $  are either isomorphic or differ by a cubic character $\psi$ of $G_{\Q(\sqrt{-3})}$ with kernel $G_K$. Numerical data in Tables \ref{T:moddata} and \ref{T:moddata2} reveal that they  are not isomorphic.
\end{proof}}
\begin{lemma}\label{cor:2}
  For any prime $p\equiv2 \mod 3$ (which is inert in $\Z[\frac{1+\sqrt{-3}}2]$), $\psi(p)=1$.
\end{lemma}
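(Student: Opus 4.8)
The plan is to read off the value of $\psi$ at an inert prime directly from the Galois theory of the field it cuts out. First I would set up the basic picture: by Theorem \ref{thm:mod-main-2}, $K=\Q(\sqrt[3]{2},\sqrt{-3})$ is the splitting field of $x^3-2$, so $\Gal(K/\Q)\cong S_3$; since $\mathrm{disc}(x^3-2)=-108$, the unique quadratic subfield of $K$ is $\Q(\sqrt{-3})$, and it is the fixed field of the order-$3$ subgroup $A_3\cong\mathbb Z/3$. Because $\psi$ is a character of $G_{\Q(\sqrt{-3})}$ with kernel $G_K$, it factors through $\Gal(K/\Q(\sqrt{-3}))=A_3$, and $G_K$ is normal in $G_\Q$ (as $K/\Q$ is Galois).

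Next, fix a prime $p\equiv 2\bmod 3$ with $p\neq 2$, so $p$ is unramified in $K$ and inert in $\Q(\sqrt{-3})$; write $\mathfrak p=(p)$ for the prime of the ring of integers of $\Q(\sqrt{-3})$ above $p$, which has residue degree $2$ over $p$, and choose a Frobenius element $\Frob_p\in G_\Q$ at $p$. The crucial step is that, since $p$ is inert in $\Q(\sqrt{-3})$, the image of $\Frob_p$ in $\Gal(K/\Q)\cong S_3$ lies outside $A_3$, hence is a transposition; its square is therefore trivial in $S_3$, so $\Frob_p^2\in G_K=\ker\psi$. On the other hand, since $f(\mathfrak p/p)=2$, the element $\Frob_p^2$ represents the Frobenius conjugacy class of $\mathfrak p$ in $G_{\Q(\sqrt{-3})}$, and as $\psi$ is abelian with kernel normal in $G_\Q$ its value is insensitive to the choices involved. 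Hence $\psi(p)=\psi(\mathfrak p)=\psi(\Frob_p^2)=1$.

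As a sanity check one can argue instead via Kummer theory: $\psi(\mathfrak p)$ equals the cubic residue symbol $\left(\tfrac{2}{\mathfrak p}\right)_3\equiv 2^{(\mathrm N\mathfrak p-1)/3}=2^{(p^2-1)/3}\pmod{\mathfrak p}$, and since $p+1\equiv 0\bmod 3$ one writes $(p^2-1)/3=(p-1)\cdot\tfrac{p+1}{3}$, a multiple of $p-1$, so $2^{(p^2-1)/3}\equiv 1\pmod p$ by Fermat's little theorem. I do not expect a genuine obstacle here; the only delicate points are conventional — being explicit that "$\psi(p)$" means the value of $\psi$ on a Frobenius at $\mathfrak p=(p)$ (equivalently $\psi(\mathfrak p)$ in the idele-class normalization) and that this is well defined, and, if one wants the statement literally for $p=2$ as well, noting that $2$ is totally ramified in $K/\Q(\sqrt{-3})$ (as $x^3-2$ is Eisenstein at the inert prime $(2)$ of $\Z[\tfrac{1+\sqrt{-3}}{2}]$), so the decomposition group modulo inertia is again trivial — a case anyway outside the range $p\ge 5$ used elsewhere in the paper.
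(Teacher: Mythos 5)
Your argument is correct, and it reaches the paper's conclusion by a slightly different route. Both proofs come down to showing that the prime $\mathfrak p=(p)$ of $\Z[\tfrac{1+\sqrt{-3}}2]$ splits completely in $K$, so that the Frobenius of $\mathfrak p$ lies in $G_K=\ker\psi$; but you get this purely group-theoretically: $p$ inert in $\Q(\sqrt{-3})$ forces the image of $\Frob_p$ in $\Gal(K/\Q)\cong S_3$ to be a transposition, whence $\Frob_p^2$, which is a Frobenius for $\mathfrak p$ because $f(\mathfrak p/p)=2$, has trivial image in $\Gal(K/\Q(\sqrt{-3}))$. The paper instead argues arithmetically: since $x\mapsto x^3$ is a bijection on $\F_p$ for $p\equiv 2\bmod 3$, the polynomial $x^3-2$ has a root in $\F_p$ and hence splits over the residue field $\F_{p^2}$ of $\mathfrak p$, so $\mathfrak p$ splits completely in $K$. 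Your ``sanity check'' via the cubic residue symbol ($2^{(p^2-1)/3}\equiv 1$ because $(p^2-1)/3$ is a multiple of $p-1$) is essentially the paper's computation in Kummer-theoretic dress. The group-theoretic version has the small advantage of working verbatim for any $S_3$-extension and any prime inert in its quadratic resolvent, with no appeal to Kummer theory or roots of unity in the residue field, while the paper's version is more self-contained and elementary. Your care about the meaning of $\psi(p)$ (value on a Frobenius at $\mathfrak p$, well defined since $\psi$ is unramified there for $p\ne 2,3$) and about the excluded case $p=2$ (where $\mathfrak p$ is totally ramified in $K/\Q(\sqrt{-3})$, a case irrelevant to the paper's use of the lemma for $p\ge 5$) is appropriate and does not affect the argument.
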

\begin{proof}
 The polynomial $X^3-2$ has exactly one root in $\F_p$ when $p\equiv2 \mod 3$ as $X\mapsto X^3$ is a bijection in $\F_p$. Thus for any prime  $p\equiv2 \mod 3$, which is inert in $\Z[\frac{1+\sqrt{-3}}2]$, it splits completely in the Galois extension $K$ of $\Q(\sqrt{-3})$. This means $\psi(p)=1$.
\end{proof}

\section{Modularity}\label{S:mod}
Our goal is to prove Theorem \ref{thm:mod-main-1}.  Since $\rho_{\ell,4}$ satisfies quaternion multiplication over  $\Q(i,\sqrt{-3})$,   by the main result of \cite{ALLL09} the L-function of $\rho_{\ell,4}^{\vee}$ coincides with the L-function of a $\text{GL}_4(\mathbb A_{\Q})$ automorphic form. Here we will prove  this claim directly.
\footnote{ Using the approach of \cite{ALLL09}, one can  also derive that there  exists  a quadratic character $\phi$ of $G_{\Q(\sqrt{-3})}$ with fixed field $\Q(\sqrt[4]{-3})$ such that $ (\sigma_{\l,2,-3} \otimes \psi\otimes\phi)^{\vee}=(\sigma_{\l,4,-3} \otimes \phi)^{\vee}$ agrees with $\rho_g|_{G_{\Q(\sqrt {-3})}}$ where $\rho_g$ is the Deligne representation attached to a certain new form $g$, that is different from the new form $f$ below.}

We identified using William
Stein's Magma package the following Hecke eigenform
\begin{eqnarray*}
f(z)&=& q + 6\sqrt{2}q^5 + \sqrt{-3}q^7 + 6\sqrt{-6}q^{11}+
 13q^{13} - 6\sqrt{2}q^{17} + 11\sqrt{-3}q^{19} \\&& - 18\sqrt{-6}q^{23}+
  47q^{25} - 24\sqrt{2}q^{29} +   24\sqrt{-3}q^{31} +
6\sqrt{-6}q^{35} + \cdots,
\end{eqnarray*}
More concretely,
\begin{equation}
f(z)=\sum_{n\ge 1}c_p(f)q^n = f_1(12z) + 6\sqrt{2} f_5(12z) + \sqrt{-3}f_7(12z) +
6\sqrt{-6}f_{11}(12z), \label{eqn:f_432_as_eta_and_einstein}
\end{equation}
where
\begin{eqnarray*}
f_1(z) =\frac{\eta(2z)^3\eta(3z)}{\eta(6z)\eta(z)}E_6(z), && f_5(z)=
\frac{\eta(z)\eta(2z)^3\eta(3z)^3}{\eta(6z)}
\\
f_7(z)=\frac{\eta(6z)^3\eta(z)}{\eta(2z)\eta(3z)}E_6(z),&&
f_{11}(z)= \frac{\eta(3z)\eta(z)^3\eta(6z)^3}{\eta(2z)}
\end{eqnarray*}
where $E_6(z)= 1 + 12\sum_{n\ge 1}(\sigma(3n) -3\sigma(n))q^n,$ and
$\sigma(n)=\sum_{d|n} d$. Let $\rho_{f}$ be Deligne's 2-dimensional  $\l$-adic representation of $G_{\Q}$ attached to $f$,
where $\lambda$ is the place of
$\Q _{\ell}(\sqrt{2}, \sqrt{-3})$ lying over $\ell$ (see \cite{D}, \cite{L}).  In particular
 the trace of arithmetic Frobenius $\text{Fr}_p$ under $\rho_f$ agrees with $c_p(f)$, the $p$-coefficient of $f$. These are the
conventions of \cite{DS}. { This is at variance with the conventions in \cite{D} and in Scholl's papers,
which match the characteristic polynomials of the geometric Frobenius with the Hecke polynomials
Therefore we must take duals in the statement of our main results.}

 We will now apply the method of Faltings-Serre (see \cite{serre}) to prove the result below. Briefly,
the idea is that given two nonisomorphic semisimple Galois representations
$\rho _1, \rho _2 : G_K \to \mr{GL}_2 (\Q _{\ell})$ there is a finite list of ($\tilde{G}, t$) that captures the difference between $\rho _1, \rho _2$,  where
each $\tilde{G}$ is a finite Galois group referred to as a {\it deviation group}, and
$t: \tilde{G} \to \F _{\ell}$ is a function with certain properties
that can be computed from $\rho _1, \rho _2$. To establish the isomorphism between the semisimplifications of $\rho _1$ and $ \rho _2$ we need to eliminate each  ($\tilde{G}, t$) by explicit computation. 
 The idea of the criterion is recast in \cite{lly05}, right below
the statement of Theorem 6.2.


\begin{theorem}\label{thm:mod-main} Let $\sigma_{\lambda, 4, -1}$ as before be the
representation of $G_{\Q(i)}$ whose induction to $G_\Q$ is $\rho_{\ell,4}\otimes _{\Q_\ell} K_{-1,\l}$. There exists   a quartic  character $\chi$ of $G_{\Q(i)}$ which fixes $L=\Q(i,\sqrt[4]3)$ such that up to semisimplification,  $\rho_{f} \mid _{G_{\Q(i)}}\otimes \chi$ is isomorphic to $(\sigma_{\lambda, 4, -1})^{\vee}$, the dual of $\sigma_{\lambda, 4, -1}$, as $G_{\Q(i)}$ representations.
\end{theorem}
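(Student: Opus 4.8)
The plan is to prove Theorem \ref{thm:mod-main} by the method of Faltings--Serre, as announced just before its statement. Write $\rho_1=(\sigma_{\lambda,4,-1})^{\vee}$ and $\rho_2=(\rho_f\mid_{G_{\Q(i)}})\otimes\chi$; after enlarging the coefficient field to one containing $\Q(i,\sqrt 2,\sqrt 3)$ and fixing a place above $\ell$, these are continuous $2$-dimensional $\lambda$-adic representations of $G_{\Q(i)}$, both unramified outside the set $S$ of places of $\Q(i)$ lying above $2$, $3$, $\ell$, and both absolutely irreducible ($\rho_1$ by Corollary \ref{C:modular1}, $\rho_2$ because $\rho_f$ is irreducible), so that an isomorphism of semisimplifications --- which is all the theorem asserts --- will in fact be an isomorphism. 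By the Brauer--Nesbitt theorem and the Chebotarev density theorem it therefore suffices to exhibit one quartic character $\chi$ of $G_{\Q(i)}$ with fixed field $L=\Q(i,\sqrt[4]3)$ such that $\det\rho_1=\det\rho_2$ and $\Tr\rho_1(\Frob_{\mathfrak p})=\Tr\rho_2(\Frob_{\mathfrak p})$ for every prime $\mathfrak p\notin S$. Moreover, since $\rho_{\ell,4}$, $\rho_f$ and the finite-order twist $\chi$ each belong to a strictly compatible system indexed by $\ell$, it is enough to do this for one convenient prime $\ell$ and then deduce the general case by compatibility.

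Before invoking Faltings--Serre I would pin down $\chi$ and check the determinant identity. Since $i\in\Q(i)$, Kummer theory shows $L/\Q(i)$ is cyclic of degree $4$; there are thus exactly two quartic characters of $G_{\Q(i)}$ with fixed field $L$, conjugate to one another, and their common square is the quadratic character of $G_{\Q(i)}$ with fixed field $\Q(i,\sqrt 3)$, i.e. the character $\varphi_{-1}$ of Corollary \ref{C:modular1}. A direct computation with determinants --- using Corollary \ref{C:modular1}(2), the identity $\det\rho_{\ell,4}=\epsilon_\ell^{4}$ and the nebentypus of $f$ --- shows that $\det\rho_1=\det\rho_2$ precisely when $\chi^2=\varphi_{-1}$; in particular $\chi$ is forced to be quartic, a merely quadratic twist being impossible. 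To decide which of the two quartic characters occurs, I would compare, for a handful of small split primes $\mathfrak p$, the trace of Frobenius on $\rho_1$ --- computed from the factorizations $H_{p,4}(X)=g_{p,4}(X)\,\overline{g_{p,4}(X)}$ of \S\ref{SS:modular.5} and from Table \ref{T:moddata2} --- with the Fourier coefficient $c_p(f)$ of \eqref{eqn:f_432_as_eta_and_einstein}: the ratio of the two (where it does not vanish) is a fourth root of unity whose dependence on $\mathfrak p$ matches exactly one of the two quartic characters of $G_{\Q(i)}$ cut out by $L$, and that character is $\chi$. Replacing $\sigma_{\lambda,4,-1}$ by its $G_{\Q}$-conjugate replaces $\chi$ by $\bar\chi$, and either choice is admissible.

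With $\chi$ fixed, I would run the Faltings--Serre criterion over $K=\Q(i)$ for the pair $(\rho_1,\rho_2)$, in the form recalled just below Theorem 6.2 of \cite{lly05}, taking $\ell$ a suitably small prime so that the residual representations have small image. Both $\rho_i$ may be conjugated into $\mathrm{GL}_2(\O_\lambda)$; one first checks that the reductions $\bar\rho_1,\bar\rho_2$ modulo $\lambda$ have the same semisimplification --- a finite verification of traces at primes of $\Q(i)$ of small norm, the relevant primes being controlled by the maximal extension of $\Q(i)$ unramified outside $S$. The criterion then yields a finite list of deviation groups $(\tilde G,t)$, each $\tilde G$ a finite Galois group over $\Q(i)$ unramified outside $S$ and each $t\colon\tilde G\to\F_\ell$ the obstruction to $\rho_1^{ss}\cong\rho_2^{ss}$; for each such pair one exhibits primes $\mathfrak p\notin S$ whose Frobenius classes generate the obstruction and at which $\Tr\rho_1(\Frob_{\mathfrak p})=\Tr\rho_2(\Frob_{\mathfrak p})$, i.e. at which $t$ vanishes --- an explicit and finite computation fed by Table \ref{T:moddata2} and \eqref{eqn:f_432_as_eta_and_einstein}. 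Eliminating every deviation group forces $\rho_1^{ss}\cong\rho_2^{ss}$, hence $\rho_1\cong\rho_2$, which is the theorem.

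The expected main obstacle is precisely this last, computational step. It demands a precise description of the ramification of $\rho_{\ell,4}$ and of $\rho_f$ at $2$ and $3$, and of the relevant $S$-ramified Galois extensions of $\Q(i)$, in order to enumerate the deviation groups, followed by a sizeable --- though effective --- search for primes at which the two collections of Frobenius traces agree. A subsidiary but delicate point, which the determinant identity makes rigid rather than merely experimental, is the correct identification of the twisting character $\chi$ and of its fixed field $L$, together with the bookkeeping of coefficient fields and places needed to compare the geometrically-defined $\sigma_{\lambda,4,-1}$ with Deligne's $\rho_f$.
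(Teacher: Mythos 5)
Your proposal correctly identifies the overall strategy --- Faltings--Serre applied over $\Q(i)$, after first matching determinants and pinning down the twist $\chi$ by Kummer theory --- and this is indeed the approach the paper takes. The preliminary observations (the two conjugate quartic characters with fixed field $L$ and common square $\varphi_{-1}$, the determinant identity forcing $\chi^2=\varphi_{-1}$, the reduction to a single $\ell$ by compatibility, the ambiguity $\chi\leftrightarrow\bar\chi$ matching the $G_\Q$-conjugation of $\sigma_{\lambda,4,-1}$) are all sound and agree with the paper's setup.

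However, the core of the argument --- what you flag as the ``computational step'' --- is left entirely unexecuted, and it is precisely this step that constitutes the proof. Concretely, the paper: (i) fixes $\ell=2$, takes coefficients in $\Q_2(\sqrt{-2})$, and reduces modulo $\wp=(\sqrt{-2})$ so that the residual representations land in $\mathrm{SL}_2(\F_2)\cong S_3$; (ii) determines the residual image by computing $\operatorname{Tr}\bar\rho(\mathrm{Fr}_{2+3i})=1$ (so there are order-$3$ elements), then rules out a $C_3$-image using $\mathrm{Fr}_{1+2i}$ and the classification of cubic extensions of $\Q(i)$ unramified outside $1+i$ and $3$, concluding the residual representation cuts out the splitting field of $x^3-2$; (iii) enumerates the possible deviation groups --- $S_4$-extensions of $\Q$ containing the splitting field of $x^3-2$ but not $\Q(i)$, unramified outside $2,3$, found via their cubic resolvent and the constraint $8u^3/27+v^2=\pm 2$, yielding the two polynomials $x^4+6x^2+8x-3$ and $x^4-18x^2+40x-27$; and the group $S_3\times\Z/2$, handled via the five quadratic extensions $\Q(i,\sqrt d)$ unramified at $1+i$ and $3$; (iv) eliminates each by explicit Frobenius-trace comparisons at $\Frob_{1\pm 2i}$ and $\Frob_{3\pm 2i}$. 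None of this is in your proposal; you observe (correctly) that it would have to be done, but do not do it. Since the enumeration and elimination of deviation groups is not a routine afterthought --- it is the entire mathematical content of a Faltings--Serre argument --- this is a genuine gap, not merely an omitted detail. A minor additional point: your claim that $\rho_f|_{G_{\Q(i)}}$ is absolutely irreducible ``because $\rho_f$ is irreducible'' needs a word of justification (restriction to an index-two subgroup can break irreducibility); one must note that $f$ does not have CM by $\Q(i)$, which is visible from its nonvanishing Fourier coefficients at the split primes.
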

\begin{proof}
The determinant  $\det  (\rho_{f} \mid _{G_{\Q(i)}}\otimes \chi)=\chi^2\varepsilon^2=\varphi_{-1}\varepsilon^2$, $\varphi_{-1}$ is the quadratic character of $G_{\Q(i)}$ with kernel $G_{\Q(i,\sqrt 3)}$. It coincides with the determinant of $(\sigma_{\lambda, 4, -1})^{\vee}$ by Corollary \ref{C:modular1}.

Let $H={G_{\Q(i)}}$.
  By the explicit form of $f$, $\rho_{f} \mid _{H}$ is a 2-dimensional representation of $H$ over $\Q_{\ell}(\sqrt 2)$. For any places $v$ above $p\equiv 5 \mod 12$, the character $\chi$ takes values $\pm i$ and  the characteristic polynomial of $\sigma_{\l,4,-1}(\Frob_v)$ is of the form $X^2+ a \sqrt{-2} X-p^2$ for some $a\in \Z$.  For any places $v$ above $p\equiv 1 \mod 12$, the character $\chi$ takes values $\pm 1$ and  the characteristic polynomial of $\sigma_{\l,4,-1}(\Frob_v)$ is of the form $X^2+ a  X+p^2$ for some $a\in \Z$.
Thus, the representation $\rho_{f} \mid _{H}\otimes \chi $ takes values in
$\Q _{\ell}(\sqrt{-2})$, as does the representation  $(\sigma_{\l,4,-1})^{\vee}$.
In the proof below, we will see that it suffices to compare  $c_p(f)\chi(v)$ with the trace of
$\sigma_{\lambda, 4, -1}( \Frob_v)$,  that is, the  trace of $(\sigma_{\lambda, 4, -1})^{\vee} (\mathrm{Fr}_v)$ for every place $v$ of $\Q(i)$ above the primes $p=5$ and 13.

Now we can use Faltings-Serre modularity criterion effectively.
We take the representation with coefficients in $\Q _2 (\sqrt{-2})$.
Let $\wp=(\sqrt{-2})$. We now consider
both representations modulo $\wp$ with
images in $\text{SL}_2(\F_2)$. For simplicity, use $\bar \rho$ to denote $\overline{\rho}_{\ell,f}|_{G_{\Q(\sqrt{-1})}}$.
The trace of
  $\bar \rho(\text{Fr}_{2+3i})=1$  so the
  image has order 3 elements. If the image  is $C_3$, then it gives rise
  to a cubic  extension of $\Q(i)$ which is unramified outside
  of $1+i$ and 3. Like Lemma 19 of \cite{long061}, we know such a
  cubic field is  the splitting field of
  $x^3-3x+1$. It is irreducible mod $2+i$, but the
characteristic polynomial
  of $\bar \rho(\text{Fr}_{1+2i})$ is $T^2+1$,
  hence it is of order 1 or 2 which leads to a contradiction.
So $\ker \bar \rho$ corresponds an $S_3$-extension of
$\Q(i)$ which can be identified as  the splitting field of $x^3-2$
over
  $\Q(i)$.

Now we are going to consider all possible deviation groups
$\widetilde{G}$ measuring the  difference between $(\sigma_{\l,4,-1} )^{\vee}$ and
$(\rho_{f}\mid _{G_{\Q(i)}})\otimes \chi$ if they are not isomorphic up to
semisimplification.

Let $F'$ be the splitting field of $x^3-2$ over $\Q$. Similar to the
proof of Lemma 19 of \cite{long061}, we first look for
$S_4$-extensions $L'$ of $\Q$ containing $F'$ but not $\Q(i)$ and
unramified outside of 2 and 3. By using the fact that their cubic
resolvent is $x^3-2$ we conclude that such $S_4$ extensions are the
splitting fields of irreducible polynomials of the form
$x^4+ux^2+vx-u^2/12$ where $u,v\in \Z[1/6]$ such that
$8u^3/27+v^2=\pm 2$. The possible polynomials giving non-isomorphic fields are
$$x^4+6x^2+8x-3, \quad x^4-18x^2+40x-27.$$ $\text{Fr}_5$ has order 4 in
these fields. By comparing the traces of $\Frob_{1\pm 2i}$ (or $\text{Fr}_{1\pm 2i}$) for both
representations, we eliminate the possibility that $\widetilde{G}$
contains $S_4$ as a subgroup.

The remaining possibility is  $\widetilde{G}=S_3\times \Z /2$. Among
all the quadratic extensions $\Q(i,\sqrt{d})$ of $\Q(i)$
  unramified at $1+i$ and 3: $d=i,1+i,1-i,3+3i,3-3i$, we have that $\text{Fr}_v$ for  $v=3+2i, 3+2i, 3-2i, 3+2i,3-2i$ are inert in these fields
  respectively. Meanwhile these Frobenius elements map under $\bar{\rho}$ to an element of order
  3. Such an element in the conjugacy class of $\text{Fr}_v$ has order 6 as it has order 3 in the $S_3$ component and order 2 in the $\Z /2$ component. Since the characteristic polynomials of
  $\Frob_{3\pm 2i}$ (or $\text{Fr}_{3\pm 2i}$) under both representations agree, we know $\widetilde{G}$ cannot be $S_3\times \Z /2$ either, because if the two representations were different,
they would  have different traces at elements of order 4 or higher in the deviation group, see \cite{lly05}.
\end{proof}

\begin{cor}
  $$L(s,\rho_{\ell,4}^{\vee})=L(s,(\sigma_{\lambda, 4, -1})^{\vee})=L(s,\rho_{f} \mid _{G_{\Q(i)}}\otimes \chi )$$ where $\rho_{f} \mid _{G_{\Q(i)}}\otimes \chi$ is a $\text{GL}_2(\mathbb A_\Q)\times \text{GL}_2(\mathbb A_\Q)$  and hence a $\text{GL}_4(\mathbb A_\Q)$ automorphic form by a result of D. Ramakrishnan \cite{Ramakrishnan00}.
\end{cor}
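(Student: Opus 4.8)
The plan is to assemble the corollary from pieces that are by now either formal or already established: read off the first equality from the induced structure of $\rho_{\ell,4}$, read off the second from the Faltings--Serre comparison of Theorem~\ref{thm:mod-main}, and obtain the automorphy by automorphic induction together with Rankin--Selberg functoriality for $\mathrm{GL}_2\times\mathrm{GL}_2\to\mathrm{GL}_4$.

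First I would recall the standard formalism: the $L$-function of an $\ell$-adic representation depends only on its semisimplification, is unchanged by enlarging the coefficient field, is inductive (the Artin formalism holds for these Hasse--Weil/Artin-type $L$-functions), and is compatible with passage to the contragredient. Applying Corollary~\ref{C:modular1}(1) with $s=-1$, so that $\Q(\sqrt{s})=\Q(i)$ and $K_{-1}=\Q(\sqrt{-2})$, gives $\rho_{\ell,4}\otimes_{\Q_\ell}K_{-1,\lambda}\cong\Ind_{G_{\Q(i)}}^{G_\Q}\sigma_{\lambda,4,-1}$. Dualizing and taking $L$-functions, $L(s,\rho_{\ell,4}^{\vee})=L\big(s,(\Ind_{G_{\Q(i)}}^{G_\Q}\sigma_{\lambda,4,-1})^{\vee}\big)=L\big(s,\Ind_{G_{\Q(i)}}^{G_\Q}(\sigma_{\lambda,4,-1}^{\vee})\big)=L\big(s,(\sigma_{\lambda,4,-1})^{\vee}\big)$, which is the first equality. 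By Theorem~\ref{thm:mod-main}, $(\sigma_{\lambda,4,-1})^{\vee}$ and $\rho_f\mid_{G_{\Q(i)}}\otimes\chi$ have isomorphic semisimplifications, so their $L$-functions coincide; this is the second equality.

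For the automorphy I would push back up to $G_\Q$ via the projection formula $\Ind_{G_{\Q(i)}}^{G_\Q}\big(\rho_f\mid_{G_{\Q(i)}}\otimes\chi\big)\cong\rho_f\otimes\Ind_{G_{\Q(i)}}^{G_\Q}\chi$, so that (up to the harmless dual) the common $L$-function above is that of $\rho_f\otimes\Ind_{G_{\Q(i)}}^{G_\Q}\chi$. Here $\rho_f$ is the $\ell$-adic representation of the weight-$3$ level-$432$ newform $f$, hence the Galois representation of a cuspidal automorphic representation $\pi_f$ of $\mathrm{GL}_2(\mathbb A_\Q)$; and $\Ind_{G_{\Q(i)}}^{G_\Q}\chi$ is a $2$-dimensional representation which, because $\chi$ is the order-$4$ character cutting out $L=\Q(i,\sqrt[4]{3})$ and $L/\Q$ is dihedral (so $\chi^\sigma=\chi^{-1}\neq\chi$), is irreducible and is the Galois representation of a cuspidal dihedral automorphic representation $\pi_\chi$ of $\mathrm{GL}_2(\mathbb A_\Q)$ coming from a Hecke character of $\Q(i)$ (automorphic induction from $\mathrm{GL}_1(\mathbb A_{\Q(i)})$). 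Ramakrishnan's theorem~\cite{Ramakrishnan00} then produces the isobaric automorphic representation $\pi_f\boxtimes\pi_\chi$ of $\mathrm{GL}_4(\mathbb A_\Q)$ with $L(s,\pi_f\boxtimes\pi_\chi)=L(s,\pi_f\times\pi_\chi)=L\big(s,\rho_f\otimes\Ind_{G_{\Q(i)}}^{G_\Q}\chi\big)$; replacing $\pi_f\boxtimes\pi_\chi$ by its contragredient (which is again automorphic) matches the dual on the Galois side and closes the chain of equalities.

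Since Theorem~\ref{thm:mod-main} carries out the genuinely hard comparison, there is no real obstacle here; the only points needing care are bookkeeping. One must keep the contragredients straight, reconciling Scholl's geometric-Frobenius normalisation with the Deligne--Serre arithmetic-Frobenius normalisation of $\rho_f$ as flagged in the text. And one should verify that $\Ind_{G_{\Q(i)}}^{G_\Q}\chi$ is genuinely irreducible --- equivalently that $\pi_\chi$ is cuspidal rather than an Eisenstein sum of two Hecke characters --- so that $\pi_f\boxtimes\pi_\chi$ is exactly the object furnished by the Rankin--Selberg construction of~\cite{Ramakrishnan00}. Everything else is the standard Artin formalism for $L$-functions.
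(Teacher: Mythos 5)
Your argument is correct and is essentially the paper's own (the corollary is stated there without a separate proof, being an immediate consequence of the inductive structure of $\rho_{\ell,4}$, Theorem~\ref{thm:mod-main}, and Ramakrishnan's $\mathrm{GL}_2\times\mathrm{GL}_2\to\mathrm{GL}_4$ result): Artin formalism for the two $L$-function equalities, then the projection formula identifying the induction with $\rho_f\otimes\Ind_{G_{\Q(i)}}^{G_\Q}\chi$ and automorphic induction of $\chi$ to a dihedral cusp form. Your added checks (irreducibility of $\Ind_{G_{\Q(i)}}^{G_\Q}\chi$ via $\chi^\sigma=\chi^{-1}$, and the Frobenius/dual bookkeeping) simply make explicit what the paper leaves implicit.
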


\section{ Atkin-Swinnerton-Dyer congruences}
\label{S:aswd}
\subsection{}
\label{SS:aswd1}
Let $V_{p, a}$ be the 4-dimensional $F$-crystal ($\Q_p$ vector space with
Frobenius action) associated with $S_3(\Gamma)$ in the model of the curve
$X(\Gamma)$ with Hauptmodul $r_a$. Note that the only difference among the
$V_{p, a}$ is the action of the Frobenius $F$. We need to consider
both $a = 2, 4$: for $a = 4$ it is easier to describe the modularity of the $\ell$-adic counterpart
 $\rho_{\ell, 4}$ (see Theorem \ref{thm:mod-main}) and the factorization of $H_{p,4}(X)$, but to prove 3-term ASD
congruences in the simplest form for the expansion coefficients of the cuspforms $h_1, h_2$ relative to the $q$-parameter via the results of
Scholl's paper \cite{Sch85i}, \cite{Sch85ii}, we need $a=2$. 
We could derive ASD congruences using the $F$-crystal $V_{p, 4}$ but these would
be expressed in a parameter $\gamma q$ for some algebraic number
$\gamma$. Therefore we only consider the $F$-crystal $V_p = V_{p, 2}$.

\subsection{}
\label{SS:aswd2}
The method is the following. We use the operators $B_s$ (resp. $B_s^*$)
$s = -1, \pm 3$ to decompose both $V_{p}$ (resp. $W_{\ell}:=W_{\ell,2}$)
into eigenspaces $V^{\pm}_{p, s}$ (resp. $W_{\lambda,s}^{\pm}$). By the way $B_s$ are defined, we know that
$B_s$ is defined over the field  $L_{-1}=\Q (i)\cdot \Q(\sqrt[3]{2})$,  $L_{-3}=\Q (\sqrt{-3})$ and $L_3=\Q (\sqrt{3}) \cdot \Q(\sqrt[3]{2})$ for
$s = -1, -3, 3$ respectively (see \S 3.5). We show that  for each prime $p \ge 5$
there is an $s$ such that the Frobenius $F$ acts on the $2$-dimensional
eigenspaces $V^{\pm}_{p,s}$ which corresponds to an $\Frob_p$ in $G_{L_s}$ acting on   $W_{\lambda,s}^{\pm}$. 
For $p \equiv \ 1$ mod $3$ we use $s = -3$; for
$p \equiv \ 5$ mod $12$ we use $s = -1$; for $p \equiv \ 11$ mod $12$ we use $s = 3$.
Following the approach of Scholl \cite{Sch85ii}, there is a comparison theorem between Frobenius action on both $\ell$-adic and $p$-adic spaces that implies
\begin{equation}\label{eq:4}
 \mathrm{Char}(F, V^{\pm}_{p,s}, X)  =  \mathrm{Char}(\mathrm{Frob}_{p},W_{\lambda,s}^{\pm} , X),
\end{equation} where  the right hand side can be computed from $f$, $\psi$ and $\chi$. (Also the Frobenius $\Frob_p$ needs further justification to ensure it commutes with $B_s^*$.)
In particular, for primes $p \equiv\ 2 \mod 3$,   the right hand side
can be computed from $f$ and $\chi$ only by using Lemma \ref{cor:2}.
 We find the ASD basis as linear combinations
$h_1 \pm \alpha h_2$ which span the $1$-dimensional spaces
\[
 V_{p}^{\pm} \cap (S_{3}(\Gamma)\otimes\overline {\Q}_p).
\]
By the Cayley-Hamilton theorem, $\mathrm{Char}(F, V^{\pm}_p, F)(g)  = 0$
for any $g \in  V^{\pm}_p$. Writing out the Frobenius action in
the local coordinate $r_2$ applied to the eigenfunctions
$h_1 \pm \alpha h_2$, this gives the desired three term ASD congruences.

\subsection{$p \equiv\ 1$ mod $3$}
\label{SS:aswd3}
\begin{prop}\label{prop:p=1 mod 3}For every prime  $p \equiv 1 \mod 3$,
  $h_1$, $h_2$  form a basis of $S_3(\G)$ for the three-term ASD
congruences at $p$ given by the characteristic polynomial
$X^2-\tau_i(a_p)X+ \tau_i(b_p)p^2 \in \Q_p[X]$ where $a_p, b_p\in
\Q(\sqrt{-3})$, $b_p$ a 6th root of unity, and $\tau_1,
\tau_2$ are two different embeddings of $\Q(\sqrt{-3})$ into $\Q_p$.
Moreover, $a_p$ differs from the $p$th coefficients of $f$ by at
most a twelfth root of unity that can be determined by $\psi$ and $\chi$ defined as before.
\end{prop}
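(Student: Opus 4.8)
The plan is to carry out the $s=-3$ case of the strategy of \S\ref{SS:aswd2}, exploiting that every prime $p\equiv 1\bmod 3$ splits in $\Q(\sqrt{-3})$. First I would record the shape of the $B_{-3}$-eigenspace decomposition on cuspforms: by the computation of $\zeta$ in the proof of Proposition \ref{P:padic}, in the basis $h_1,h_2$ the operator $B_{-3}=\zeta-\zeta^2$ is the diagonal matrix with entries $\omega_3-\omega_3^2=\sqrt{-3}$ and $\omega_3^{-1}-\omega_3^{-2}=-\sqrt{-3}$, so $h_1$ and $h_2$ already span the two eigenlines $V_p^{\pm}\cap(S_3(\Gamma)\otimes\overline{\Q}_p)$; this is why the ASD basis here is literally $\{h_1,h_2\}$ and no auxiliary linear combination is needed. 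Since $B_{-3}$ is defined over $L_{-3}=\Q(\sqrt{-3})$ (see \S\ref{SS:aswd2}) and $p$ splits there, $\Q_p(\sqrt{-3})=\Q_p$, so $B_{-3}$ acts $\Q_p$-rationally on $V_p=V_{p,2}$, while a geometric Frobenius $\Frob_{\mathfrak p}\in G_{\Q(\sqrt{-3})}$ at a prime $\mathfrak p\mid p$ fixes $\sqrt{-3}$; this supplies the justification, flagged in \S\ref{SS:aswd2}, that crystalline Frobenius $F$ commutes with $B_{-3}$ and hence preserves the $2$-dimensional eigenspaces $V_{p,-3}^{\pm}$, and similarly that $\Frob_{\mathfrak p}$ preserves $W_{\lambda,-3}^{\pm}$.

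Next I would invoke Scholl's comparison theorem \cite{Sch85ii} in the form \eqref{eq:4}: because $B_{-3}$ acts compatibly on the $\ell$-adic and crystalline realizations of Scholl's motive through the comparison isomorphism, \eqref{eq:4} descends to the eigenspaces, giving $\mathrm{Char}(F,V_{p,-3}^{\pm},X)=\mathrm{Char}(\Frob_{p},W_{\lambda,-3}^{\pm},X)$. On the $\ell$-adic side $\rho_{\ell,2}\otimes_{\Q_\ell}K_{-3,\lambda}=\Ind_{G_{\Q(\sqrt{-3})}}^{G_\Q}\sigma_{\lambda,2,-3}$, so $W_{\lambda,-3}^{+}$ and $W_{\lambda,-3}^{-}$ carry $\sigma_{\lambda,2,-3}$ and its conjugate under $\Gal(\Q(\sqrt{-3})/\Q)$, and the two right-hand polynomials are $X^2-\mathrm{tr}\,\sigma_{\lambda,2,-3}(\Frob_{\mathfrak p})X+\det\sigma_{\lambda,2,-3}(\Frob_{\mathfrak p})$ for the two primes $\mathfrak p\mid p$, which are the conjugate factors of the factorization of $H_{p,2}(X)$ over $\Q(\sqrt{-3})$ (\S\ref{S:modular}); this produces the common $a_p,b_p\in\Q(\sqrt{-3})$ and the two embeddings $\tau_1,\tau_2$. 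For the determinant I would combine $\det\sigma_{\lambda,4,-3}=\varphi_{-3}\cdot(\epsilon_\ell|_{G_{\Q(\sqrt{-3})}})^{2}$ (Corollary \ref{C:modular1}) with $\sigma_{\lambda,4,-3}=\sigma_{\lambda,2,-3}\otimes\psi$ (Theorem \ref{thm:mod-main-2}) to get $\det\sigma_{\lambda,2,-3}=\varphi_{-3}\psi^{-2}\cdot(\epsilon_\ell|_{G_{\Q(\sqrt{-3})}})^{2}$, so evaluation at $\Frob_{\mathfrak p}$ yields $b_pp^2$ with $b_p=\varphi_{-3}(\Frob_{\mathfrak p})\psi(\Frob_{\mathfrak p})^{-2}$ a product of a value of a quadratic character and of a cubic character, hence a sixth root of unity. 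For the trace I would use $\mathrm{tr}\,\sigma_{\lambda,2,-3}(\Frob_{\mathfrak p})=\psi(\Frob_{\mathfrak p})^{-1}\,\mathrm{tr}\,\sigma_{\lambda,4,-3}(\Frob_{\mathfrak p})$ together with Theorem \ref{thm:mod-main}, Theorem \ref{thm:mod-main-1} and the diagram of \S\ref{SS:intro3} to identify $\mathrm{tr}\,\sigma_{\lambda,4,-3}(\Frob_{\mathfrak p})$, after passing to the dual to match Scholl's conventions, with $\chi(\Frob_{\mathfrak q})\,c_p(f)$ or its conjugate, for a suitable prime $\mathfrak q\mid p$ of $\Q(i)$; then $a_p$ equals $c_p(f)$ up to $\psi(\Frob_{\mathfrak p})^{-1}\chi(\Frob_{\mathfrak q})$, a product of a cube root and a fourth root of unity, i.e. a twelfth root of unity determined by $\psi$ and $\chi$.

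Finally I would convert this into congruences by the last step of \S\ref{SS:aswd2}: by Cayley--Hamilton, $F$ annihilates $h_j\in V_{p,-3}^{\pm}\cap(S_3(\Gamma)\otimes\overline{\Q}_p)$ after substitution into its characteristic polynomial $X^2-\tau_j(a_p)X+\tau_j(b_p)p^2$ for $j=1,2$; writing the crystalline Frobenius action in the local coordinate $r_2$ at the $\Q$-rational cusp $r_2=1$ over $t=1$ --- which is precisely the reason the model $a=2$ rather than $a=4$ is used (\S\ref{SS:aswd1}) --- and reading off $q$-expansion coefficients turns $F^2h_j-\tau_j(a_p)Fh_j+\tau_j(b_p)p^2h_j=0$ into the three-term congruence $a(np^r)-\tau_j(a_p)a(np^{r-1})+\tau_j(b_p)p^2a(np^{r-2})\equiv 0\bmod p^{2r}$ for all $n,r\ge 1$, with $k-1=2$ as required. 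I expect the main obstacle to be the bookkeeping of the middle paragraph: pinning down which Galois conjugate and dual of $\sigma_{\lambda,4,-3}$ is matched to $f$ at each prime, and tracking the precise root of unity $\psi(\Frob_{\mathfrak p})^{-1}\chi(\Frob_{\mathfrak q})$ through the inductions and twists of the diagram in \S\ref{SS:intro3} so that the twelfth root of unity is genuinely explicit; by comparison the commutation of $F$ with $B_{-3}$ on the crystalline side is routine once $p$ splits in $\Q(\sqrt{-3})$.
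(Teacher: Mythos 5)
Your proposal is correct and follows essentially the same route as the paper's own (much terser) proof: diagonalize $B_{-3}$ on $h_1,h_2$, use that $B_{-3}^*$ is defined over $\Q(\sqrt{-3})$ so Frobenius at a split prime preserves the eigenspaces, factor $H_{p,2}$ over $\Q(\sqrt{-3})$, and determine $a_p$, $b_p$ from the modularity theorems together with the comparison identity \eqref{eq:4} and the Cayley--Hamilton argument of \S\ref{SS:aswd2}. The extra detail you supply (the explicit eigenvalues $\pm\sqrt{-3}$, the determinant computation giving the sixth root of unity, and the $\psi^{-1}\chi$ bookkeeping for the twelfth root of unity) is exactly what the paper leaves implicit in its citations of Theorems \ref{thm:mod-main-1}, \ref{thm:mod-main-2}, \ref{thm:mod-main} and Corollary \ref{C:modular1}.
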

\begin{proof}Let $p\equiv 1 \mod 3$ be a prime.
 The cuspforms $h_1$ and $h_2$ are distinct eigenvectors  the $B_{-3}$-operator.
On $W_\ell$, the corresponding operator $B_{-3}^*$  is defined over $\Q(\sqrt{-3})$, hence commutes with $\Frob_p$.  Consequently,
$$H_{p,2}(X)=(X^2-a_pX+b_p p^2)(X^2-\bar a_p X +\bar b_p p^2)$$ for some $a_p,b_p\in \Q(\sqrt{-3})$ and the bar denotes complex conjugation. The value of $b_p$ can be determined by Theorem \ref{thm:mod-main-1} and Theorem \ref{thm:mod-main}.
 The claim follows from \eqref{eq:4} and the discussion in the beginning of this section.

\end{proof}

\subsection{$p \equiv \ 2$ mod $3$}
\label{SS:aswd4}
In this case $X^3-2$ has exactly one root in $\F_p$ which
gives rise to a unique embedding   of $\Q(\sqrt[3]{2})$ in
$\Q_p$. In the sequel, we regard $\sqrt[3]{2}$ as an element in
$\Q_p$.

\begin{prop}\label{prop:p=5}
  When $p\equiv5 \mod 12$, let $\tau$ be an embedding of $\Q(i,\sqrt{2})$
  to $\Q_p(\sqrt{2})$.
 The functions
  $h_1\pm \frac{\tau(2^{1/2})}{2^{1/3}}h_2$ form a basis for the three-term ASD
congruences at $p$ given by the characteristic polynomial
$X^2\pm a_p\tau(\sqrt{-2})X-p^2\in \Q_p(\sqrt{2})[X]$ where $a_p\in
\Z$ and $a_p\cdot \sqrt{-2}$ differs from the $p$th coefficients of $f$ by
a fourth root of unity.
\end{prop}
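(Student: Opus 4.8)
The plan is to run the argument of Proposition~\ref{prop:p=1 mod 3} with the operator $B_{-1}$ in place of $B_{-3}$; the new point is that for $p\equiv 2\bmod 3$ there is no place of $\Q(\sqrt{-3})$ of residue degree one over $p$, so one cannot use $B_{-3}$ and must instead work with a place $v$ of $L_{-1}=\Q(i,\sqrt[3]2)$, transporting the modularity information across from $\rho_{\ell,4}$. First I would record the local arithmetic: since $p\equiv 5\bmod 12$ we have $p\equiv 1\bmod 4$, so $i\in\Q_p$, and $p\equiv 2\bmod 3$, so $X^3-2$ has a unique root in $\F_p$, which lifts by Hensel to $\sqrt[3]2\in\Q_p$; hence $L_{-1}$ embeds into $\Q_p$. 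Fix such an embedding, let $v$ be the induced place (of residue degree one over $p$), and extend to $\tau\colon\Q(i,\sqrt2)\hookrightarrow E:=\Q_p(\sqrt2)$. From the basis $h_1,h_2$ of $S_3(\G)$ (\S\ref{SS:group5}), the matrix of $A$ on this basis computed in the proof of Proposition~\ref{P:padic}, and the relation $B_{-1}^2=-8$, one sees that $B_{-1}^{*}$ acts on $S_3(\G)\otimes E$ with one-dimensional eigenspaces spanned by $h_1\pm\frac{\tau(\sqrt2)}{\sqrt[3]2}h_2$ and eigenvalues $\pm 2\,\tau(\sqrt{-2})$.

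In the model with Hauptmodul $r_2$, the isogeny $A$ and hence the operator $B_{-1}^{*}$ on $W_\ell=W_{\ell,2}$ is defined over $L_{-1}$ (see \S\ref{SS:corr5}); since $v$ has residue degree one over $p$, the geometric Frobenius $\Frob_v\in G_{L_{-1}}$ commutes with $B_{-1}^{*}$. Hence the two two-dimensional eigenspaces $W^{\pm}_{\lambda,-1}\subset W_\ell\otimes E$ of $B_{-1}^{*}$ are $\Frob_v$-stable, and
\[
 H_{p,2}(X)=\mathrm{Char}(\Frob_v,W^{+}_{\lambda,-1},X)\cdot\mathrm{Char}(\Frob_v,W^{-}_{\lambda,-1},X).
\]

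The next step is to identify these quadratic factors using modularity. Since $p\equiv 2\bmod 3$ is inert in $\Q(\sqrt{-3})$, Lemma~\ref{cor:2} gives $\psi(\Frob_{\mathfrak p})=1$ for the place $\mathfrak p\mid p$, so by Theorem~\ref{thm:mod-main-2} the matrices $\sigma_{\lambda,2,-3}(\Frob_{\mathfrak p})$ and $\sigma_{\lambda,4,-3}(\Frob_{\mathfrak p})$ coincide; as $\rho_{\ell,2}$ and $\rho_{\ell,4}$ are both induced from $G_{\Q(\sqrt{-3})}$ with $\Frob_p$ outside this subgroup, this forces $H_{p,2}=H_{p,4}$. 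On the other hand, from $\rho_{\ell,4}\otimes K_{-1,\lambda}=\Ind_{G_{\Q(i)}}^{G_\Q}\sigma_{\lambda,4,-1}$ and $\det\sigma_{\lambda,4,-1}=\varphi_{-1}\epsilon_\ell^{2}$ (Corollary~\ref{C:modular1}) together with Theorem~\ref{thm:mod-main} --- in whose proof it is recorded that for $p\equiv 5\bmod 12$ the characteristic polynomial of $\sigma_{\lambda,4,-1}(\Frob_v)$ has the shape $X^2+a\,\tau(\sqrt{-2})X-p^2$ with $a\in\Z$, while $\chi$ takes the value $\pm i$ at places over $p$ and $c_p(f)$ is an integer multiple of $\sqrt2$ --- one deduces
\[
 H_{p,4}(X)=\bigl(X^2+a_p\tau(\sqrt{-2})X-p^2\bigr)\bigl(X^2-a_p\tau(\sqrt{-2})X-p^2\bigr)
\]
for some $a_p\in\Z$, with $a_p\,\tau(\sqrt{-2})$ equal to $c_p(f)$ up to a fourth root of unity. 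This is the unique factorization of $H_{p,2}=H_{p,4}$ into two monic quadratics over $E$ whose linear coefficients are non-units --- any other pairing of the four roots, all of absolute value $p$, would require adjoining to $E$ an irrationality not in it --- so the $B_{-1}^{*}$-eigenspace factorization must coincide with it, giving $\mathrm{Char}(\Frob_v,W^{\pm}_{\lambda,-1},X)=X^2\pm a_p\tau(\sqrt{-2})X-p^2$ for a suitable matching of signs.

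Finally, Scholl's comparison theorem in the form \eqref{eq:4} --- applicable on these eigenspaces precisely because $B_{-1}^{*}$ is defined over $L_{-1}$ and $v$ has residue degree one over $p$, which is exactly the commutation of $\Frob_v$ with $B_{-1}^{*}$ needed in the remark after \eqref{eq:4} --- yields $\mathrm{Char}(F,V^{\pm}_p,X)=X^2\pm a_p\tau(\sqrt{-2})X-p^2$ on the $F$-crystal $V_p=V_{p,2}$. The functions $h^{\pm}:=h_1\pm\frac{\tau(\sqrt2)}{\sqrt[3]2}h_2$ span the one-dimensional spaces $V^{\pm}_p\cap\bigl(S_3(\G)\otimes E\bigr)$, so by Cayley--Hamilton $(F^{2}\pm a_p\tau(\sqrt{-2})F-p^{2})h^{\pm}=0$; writing the Frobenius action out in the local parameter at the $\Q$-rational cusp $r_2=1$ (corresponding to $\tau=i\infty$), as in \S\ref{SS:aswd2} and \cite{Sch85ii}, this unwinds to $a_{\pm}(np^{r})\pm a_p\tau(\sqrt{-2})\,a_{\pm}(np^{r-1})-p^{2}a_{\pm}(np^{r-2})\equiv 0\bmod p^{2r}$ for all $n,r\ge 1$, i.e.\ $h^{\pm}$ satisfies the three-term ASD congruence relative to $X^2\pm a_p\tau(\sqrt{-2})X-p^2$. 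The main obstacle is the bookkeeping in the third step: the chain $\rho_{\ell,2}\leadsto\rho_{\ell,4}\leadsto\sigma_{\lambda,4,-1}\leadsto(\rho_f|_{G_{\Q(i)}}\otimes\chi)^{\vee}$ must be traced with care for duals, for arithmetic versus geometric Frobenius, and for the precise value of $\chi$ at $v$, and one must also verify --- by functoriality of the crystalline--\'etale comparison with respect to the correspondence $B_{-1}$ --- that the $p$-adic and $\ell$-adic descriptions agree on the $B_{-1}^{*}$-eigenspaces and not merely on all of $V_{p,2}$ and $W_{\ell,2}$.
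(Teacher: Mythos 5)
Your proposal is correct and follows essentially the same route as the paper's proof: the operator $B_{-1}$ defined over $L_{-1}=\Q(i)\cdot\Q(\sqrt[3]{2})$, the unique degree-one place above $p$ making $\Frob$ commute with $B_{-1}^{*}$, Lemma \ref{cor:2} to transfer from $\rho_{\ell,2}$ to $\rho_{\ell,4}$, the quaternion-multiplication structure plus Theorem \ref{thm:mod-main} to pin down $X^2\pm a_p\tau(\sqrt{-2})X-p^2$ and its relation to $c_p(f)$ up to a fourth root of unity, the eigenfunctions $h_1\pm\frac{\tau(\sqrt2)}{2^{1/3}}h_2$, and the comparison \eqref{eq:4} with Cayley--Hamilton. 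The only cosmetic difference is that the paper equates the eigenspace characteristic polynomials for $a=2$ and $a=4$ directly from Lemma \ref{cor:2}, whereas you pass through the full degree-4 polynomials $H_{p,2}=H_{p,4}$ and a uniqueness-of-factorization argument, which amounts to the same thing.
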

\begin{proof}Let $p\equiv 5 \mod 12$ be a prime.
The operator $B_{-1}$ is defined over the field $L_{-1}=\Q (\sqrt{-1})\cdot \Q(\sqrt[3]{2}).$ In  the ring of integers of $L_{-1}$ there is a unique place above $p$ with relative degree 1. By abusing notation, we denote this place by $p$ again. Under this assumption $\mathrm{Frob}_p$ commutes with $B_{-1}$. Thus $\mathrm{Frob}_p$ (as an element of $G_{L_{-1}}$) acts on the
eigenspace $W^{\pm}_{\lambda,-1}$ and \eqref{eq:4} holds. By Lemma \ref{cor:2}, $$\text{Char}(F, V_{\l,-1}^{\pm},X)= \text{Char}(\Frob_p, W_{\l,2, -1}^{\pm},X)= \text{Char}(\Frob_p, W_{\l,4, -1}^{\pm},X).$$ As a consequence of $\rho_{\ell,4}$ satisfying the quaternion multiplication, we know that $$\text{Char}(\Frob_p, W_{\l,4, -1}^{\pm},X)=X^2\pm a_p\sqrt{-2}X-p^2$$ for some $a_p\in \Z$. By Theorem \ref{thm:mod-main}, $a_p\sqrt{-2}$ is different from the $p$th coefficient of $f$ by  a 4th fourth root of unity  as $\chi$ takes value $\pm i$ for any place $v$ above $p\equiv5 \mod 12$ in $\Z[i]$. The eigenfunctions of $B_{-1}$ are $h_1\pm
\frac{2^{1/2}}{2^{1/3}}h_2$. For any embedding $\tau$ of
$\Q(i,\sqrt{2})$
  to $\Q_p(\sqrt{2})$, $h_1\pm
 \frac{\tau(2^{1/2})}{2^{1/3}}h_2$ is a formal power series in
 $\Q_p(\sqrt{2})[[q]].$  It  satisfies the three term  ASD
congruences at $p$ given by $X^2\pm \tau(a_p) X-p^2$ as claimed.
\end{proof}

\begin{prop}\label{prop:p=11}
  When $p\equiv 11 \mod 12$,   let $\tau$ be an embedding of $\Q(\sqrt{3},\sqrt{-2})$
  to $\Q_p(\sqrt{-2})$.
 The functions
  $h_1\pm \frac{\tau((-2)^{1/2})}{2^{1/3}}h_2$ form a basis for the three-term ASD
congruences at $p$ given by the characteristic polynomial
$X^2\pm a_p\tau(\sqrt{-6})X-p^2\in \Q_p[T]$ where $a_p\in \Z$  and $a_p\tau(\sqrt{-6})$
differs from the $p$th coefficients of $f$ by at most a most a $\pm$ sign.
\end{prop}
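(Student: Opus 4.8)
The plan is to argue exactly as in the proof of Proposition~\ref{prop:p=5}, now with the operator $B_3 = A(\zeta-\zeta^2)$ in place of $B_{-1}$ and the field $L_3 = \Q(\sqrt3)\cdot\Q(\sqrt[3]{2})$ in place of $L_{-1}$. Fix a prime $p\equiv 11\mod 12$. Since $p\equiv 2\mod 3$, the map $X\mapsto X^3$ is a bijection of $\F_p$, so $X^3-2$ has a unique root in $\F_p$ and $\Q(\sqrt[3]{2})$ embeds uniquely into $\Q_p$; since moreover $p\equiv 3\mod 4$ and $p\equiv 2\mod 3$, quadratic reciprocity gives $\left(\frac3p\right)=1$, so $\Q(\sqrt3)$ embeds into $\Q_p$. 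Hence there is a place $v$ of $L_3$ above $p$ of residue degree $1$, which I again denote by $p$. Although $B_3^* = A^*\bigl(\zeta^*-(\zeta^*)^2\bigr)$ and its two factors are \emph{a priori} only defined over $\Q(i,\sqrt[3]{2})$ and $\Q(\sqrt{-3})$ respectively, the irrationalities $i$ and $\sqrt{-3}$ cancel in the product (compare the matrix computation in the proof of Proposition~\ref{P:padic}), so $B_3^*$ is defined over $L_3$; hence $\Frob_p$ commutes with $B_3^*$, acts on each $2$-dimensional eigenspace $W_{\lambda,3}^{\pm}$, and Scholl's comparison theorem yields \eqref{eq:4}: $\mathrm{Char}(F,V_{p,3}^{\pm},X) = \mathrm{Char}(\Frob_p,W_{\lambda,3}^{\pm},X)$.

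Next I would compute the right-hand side in terms of $f$. Because $p\equiv 2\mod 3$, Lemma~\ref{cor:2} gives $\psi(p)=1$, so, exactly as in Proposition~\ref{prop:p=5} and using Theorem~\ref{thm:mod-main-2}, one obtains $\mathrm{Char}(\Frob_p,W_{\lambda,2,3}^{\pm},X) = \mathrm{Char}(\Frob_p,W_{\lambda,4,3}^{\pm},X)$ (up to relabeling the two eigenspaces). Since $\rho_{\ell,4}$ satisfies quaternion multiplication over $\Q(\sqrt{-3},\sqrt{-2})$ (Proposition~\ref{P:corr2}), the quaternion-multiplication package of \cite{ALLL09}, together with the factorization of $H_{p,4}(X)$ over $\Q(\sqrt{-6})$ for $p\equiv 11\mod 12$, forces $\mathrm{Char}(\Frob_p,W_{\lambda,4,3}^{\pm},X) = X^2\pm a_p\sqrt{-6}\,X - p^2$ for some $a_p\in\Z$; here the constant term $-p^2$ follows from Corollary~\ref{C:modular1}, since $\det\sigma_{\lambda,4,3}=\varphi_3\cdot(\epsilon_\ell|_{G_{\Q(\sqrt3)}})^2$ and $v$ is inert in $\Q(i,\sqrt3)/\Q(\sqrt3)$ (as $p\equiv 3\mod 4$), so $\varphi_3(\Frob_p)=-1$. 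Finally, Theorem~\ref{thm:mod-main} together with the explicit $q$-expansion of $f$ --- whose $p$th coefficient is a rational multiple of $\sqrt{-6}$ when $p\equiv 11\mod 12$ --- identifies $a_p\sqrt{-6}$ with $c_p(f)$ up to a sign, the relevant quartic character taking the value $\pm 1$ on this class.

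It then remains to read off the congruences. From the matrices of $\zeta$ and $A$ on $S_3(\G)$ computed in the proof of Proposition~\ref{P:padic}, $B_3$ acts on the basis $h_1,h_2$ by $\begin{pmatrix}0 & 2^{4/3}\sqrt3\\ -2^{5/3}\sqrt3 & 0\end{pmatrix}$, whose eigenvectors are $h_1\pm\frac{(-2)^{1/2}}{2^{1/3}}h_2$; these span the lines $V_{p,3}^{\pm}\cap\bigl(S_3(\G)\otimes\overline{\Q}_p\bigr)$. For any embedding $\tau$ of $\Q(\sqrt3,\sqrt{-2})$ into $\Q_p(\sqrt{-2})$ --- which exists since $\sqrt3\in\Q_p$ --- and with $\sqrt[3]{2}$ the distinguished cube root of $2$ in $\Q_p$ fixed in \S\ref{SS:aswd4}, the form $h_1\pm\frac{\tau((-2)^{1/2})}{2^{1/3}}h_2$ lies in $\Q_p(\sqrt{-2})[[q]]$. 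Applying the Cayley--Hamilton theorem to $F$ on $V_{p,3}^{\pm}$, and writing out the Frobenius in the $q$-parameter at the $\Q$-rational cusp $\tau=i\infty$ (which in the $r_2$-model is $r_2=1$; see \S\ref{SS:corr5}), one obtains, since $k=3$, the asserted three-term ASD congruences relative to $X^2\pm a_p\tau(\sqrt{-6})X-p^2$, i.e. modulo $p^{2r}$.

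The step I expect to be the main obstacle is the bookkeeping concentrated in the second paragraph: one must check that $\Frob_p$ genuinely commutes with $B_3^*$ (which rests both on the cancellation of $i$ and $\sqrt{-3}$ in $B_3^*=B_{-1}^*B_{-3}^*$ and on the existence of a residue-degree-one place of $L_3$ above $p$), and that the quaternion-multiplication shape of $\mathrm{Char}(\Frob_p,W_{\lambda,4,3}^{\pm},X)$ and the identification of $a_p\sqrt{-6}$ with $c_p(f)$ --- both most transparent for the model $a=4$ --- may legitimately be transferred to $\rho_{\ell,2}$ by means of $\psi(p)=1$, since the ASD congruences must be expressed in the $r_2$-parameter.
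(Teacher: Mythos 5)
Your proof is correct and follows exactly the route the paper intends: the paper's own ``proof'' of Proposition~\ref{prop:p=11} is just the one-line remark that the reasoning mirrors Proposition~\ref{prop:p=5} with $B_3$ (defined over $\Q(\sqrt3)\cdot\Q(\sqrt[3]{2})$, $B_3^2=-24$) in place of $B_{-1}$, and you have faithfully and carefully carried out that analogy, including the quadratic-reciprocity check that $\sqrt3\in\Q_p$, the cancellation of $i$ and $\sqrt{-3}$ making $B_3$ defined over $L_3$, the eigenvector computation giving $h_1\pm\frac{\sqrt{-2}}{2^{1/3}}h_2$, and the determinant/modularity identification of the characteristic polynomial.
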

\begin{proof}
 The reasoning is similar to the previous case but with the operator $B_3$, defined
over $\Q (\sqrt{3})\cdot\Q(\sqrt[3]{2})$ with $B_3 ^2 = -24$.
\end{proof}

\section{tables}
\label{S:tab}

In tables \ref{T:moddata}, \ref{T:moddata2} we display the factor $g_{p,a}(X)$ such that the
characteristic polynomial of $\Frob_p$ is $ H_{p,a}(X)= g_{p,a}(X)
\overline{g_{p,a}(X)}$. When $a=2$, we write this in the form
\[
g_{p,2}(X) = X^2 - \zeta c_p(f)X + (-4/p)\zeta ^2p^2
\]
for a twelfth root of unity $\zeta$.

{\small
\begin{table}[h]
\begin{tabular}{|c|c|c|c|}
\hline
$p$ &  $g_{p,2}(X)$ &$ c_p(f)$ & $\zeta$\\
\hline
5 & $X^2 + 6\sqrt{-2} X - 5^2$ & $6\sqrt{2}$ & $i$ \\
\hline 7 & $X^2 + \sqrt{-3}\left (\tfrac{-1-\sqrt{-3}}{2}\right ) X
         - \left (\tfrac{-1+\sqrt{-3}}{2}\right )7^2$
           & $-\sqrt{-3}$ & $\omega ^4$ \\
\hline
11 & $X^2 + 6\sqrt{-6} X - 11^2$ & $-6\sqrt{-6}$ & $1$ \\
\hline 13 & $X^2 -13 \left (\tfrac{1+\sqrt{-3}}{2}\right ) X
         + \left (\tfrac{-1+\sqrt{-3}}{2}\right )13^2$
           & $13$ & $\omega $ \\
\hline
17 & $X^2 + 6\sqrt{-2} X - 17^2$ & $-6\sqrt{2}$ & $i$ \\
\hline 19 & $X^2 + 11\sqrt{-3}\left (\tfrac{-1+\sqrt{-3}}{2}\right )X
         - \left (\tfrac{-1-\sqrt{-3}}{2}\right )19^2$
           & $-11\sqrt{-3}$ & $\omega ^2$ \\
 \hline
23 & $X^2 -18\sqrt{-6} X - 23^2$ & $18\sqrt{-6}$ & $1$ \\
\hline 29 & $X^2 +24\sqrt{-2} X  -29^2$
           & $-24\sqrt{2}$ & $i$ \\
\hline 31 & $X^2 -24\sqrt{-3} X
         -31^2$
           & $24\sqrt{-3}$ & $\omega^6$ \\

 \hline 37 & $X^2 -35 \left (\tfrac{-1-\sqrt{-3}}{2}\right ) X
         + \left (\tfrac{-1+\sqrt{-3}}{2}\right )37^2$
           & $35$ & $\omega ^4 $ \\
\hline
41 & $X^2 -41^2$ &0& $i$ \\
\hline 43 & $X^2 + 24\sqrt{-3}X
         - 43^2$
           & $-24\sqrt{-3}$ & $\omega ^6$ \\
\hline
47 & $X^2 -6\sqrt{-6} X - 47^2$ & $6\sqrt{-6}$ & $1$ \\
\hline 53 & $X^2 -36\sqrt{-2} X -53^2$
           & $36\sqrt{2}$ & $i$ \\
\hline
59 & $X^2 -30\sqrt{-6} X - 59^2$ & $30\sqrt{-6}$ & $1$ \\
\hline
\end{tabular}
\caption{ Factorization of $ H _{p, 2}= g_{p,2}(X)\overline{g_{p,2}(X)} $, coefficients of $f$;
$\omega = \mathrm{exp}(2 \pi i /6)$, $i = \sqrt{-1}$.} \label{T:moddata}
\end{table}}

{\small
\begin{table}[h]
\begin{tabular}{|c|c|c|}
\hline
$p$ &  $g_{p,4}(X)$ &$ c_p(f)$ \\
\hline
5 & $X^2 + 6\sqrt{-2} X - 5^2$ & $6\sqrt{2}$ \\
\hline 7 & $X^2 + \sqrt{-3}X
         - 7^2$
           & $-\sqrt{-3}$  \\
\hline
11 & $X^2 + 6\sqrt{-6} X - 11^2$ & $-6\sqrt{-6}$  \\
\hline 13 & $X^2 +13 X
         + 13^2$
           & $13$ \\
\hline
17 & $X^2 + 6\sqrt{-2} X - 17^2$ & $-6\sqrt{2}$\\
\hline 19 & $X^2 + 11\sqrt{-3}
X
         - 19^2$
           & $-11\sqrt{-3}$  \\
 \hline

23 & $X^2 -18\sqrt{-6} X - 23^2$ & $18\sqrt{-6}$  \\
\hline 29 & $X^2 +24\sqrt{-2} X  -29^2$
           & $-24\sqrt{2}$  \\
\hline 31 & $X^2 -24\sqrt{-3} X
         -31^2$
           & $24\sqrt{-3}$  \\
 \hline
37 & $X^2 -35  X
         + 37^2$
           & $35$ \\
\hline
41 & $X^2 -41^2$ &0 \\
\hline 43 & $X^2 + 24\sqrt{-3}X
         - 43^2$
           & $-24\sqrt{-3}$ \\
\hline
47 & $X^2 -6\sqrt{-6} X - 47^2$ & $6\sqrt{-6}$  \\
\hline 53 & $X^2 -36\sqrt{-2} X -53^2$
           & $36\sqrt{2}$  \\
\hline
59 & $X^2 -30\sqrt{-6} X - 59^2$ & $30\sqrt{-6}$  \\
\hline
\end{tabular}
\caption{Factorization of $ H _{p,4}= g_{p,4}(X)\overline{g_{p,4}(X)} $, coefficients of $f$.}
\label{T:moddata2}
\end{table}}


\newcommand{\etalchar}[1]{$^{#1}$}
\providecommand{\bysame}{\leavevmode\hbox
to3em{\hrulefill}\thinspace}
\providecommand{\MR}{\relax\ifhmode\unskip\space\fi MR }
\providecommand{\MRhref}[2]{%
  \href{http://www.ams.org/mathscinet-getitem?mr=#1}{#2}
} \providecommand{\href}[2]{#2}

\end{document}